\definecolor{dkgreen}{rgb}{0,0.6,0}
\definecolor{gray}{rgb}{0.5,0.5,0.5}
\definecolor{mauve}{rgb}{0.58,0,0.82}
\tiny\color{gray},
\newtheorem{theorem}{Theorem}
\newtheorem{lemma}[theorem]{Lemma}
\newtheorem{proposition}[theorem]{Proposition}
\theoremstyle{definition}
\newtheorem{remark}[theorem]{Remark}
\newtheorem{example}[theorem]{Example}
\newtheorem*{theorem*}{Theorem}
\newtheorem*{remark*}{Remark}
\newtheorem*{remarks*}{Remarks}
\newtheorem*{definition*}{Definition}
\newtheoremstyle{named}{}{}{\itshape}{}{\bfseries}{.}{.5em}{\thmnote{#3}#1}
\theoremstyle{named}
\newcommand{\FF}{\mathbb{F}}      
\newcommand{\ZZ}{\mathbb{Z}}     
\newcommand{\QQ}{\mathbb{Q}}      
\newcommand{\PGL}{\mathrm{PGL}}
\newcommand{\Gal}{\mathrm{Gal}}
\DeclareMathOperator{\Tr}{Tr}
\DeclareMathOperator{\N}{N}
\DeclareMathOperator{\Res}{Res}
\DeclareMathOperator{\Aut}{Aut}
\DeclareMathOperator{\Disc}{Disc}
\title{Dynamical Irreducibility of Certain Families of Polynomials over Finite Fields}
\author[Day]{Tori Day}
\address{Department of Mathematics and Statistics, Mount Holyoke College, South Hadley, MA 01075 USA}
\email{tori.day@mtholyoke.edu}
\author[Deland]{Rebecca DeLand}
\address{Department of Mathematics, University of Colorado, Boulder, CO 80309 USA}
\email{rebecca.deland@colorado.edu}
\author[Juul]{Jamie Juul}
\address{Department of Mathematics, Colorado State University, Fort Collins, CO 80523 USA}
\email{jamie.juul@colostate.edu}
\author[Thomas]{Cigole Thomas}
\address{Department of Mathematics, Colorado State University, Fort Collins, CO 80523 USA}
\email{cigole.thomas@colostate.edu}
\author[Thompson]{Bianca Thompson} 
\address{Department of Mathematics, Westminster University, Salt Lake City, UT 84105 USA}
\email{bthompson@westminsteru.edu}
\author[Tobin]{Bella Tobin}
\address{Department of Mathematics, Agnes Scott College, Decatur, GA 30030 USA}
\email{btobin@agnesscott.edu}
\date{}
\keywords{dynamical irreducibility, stable}
\begin{document}

\begin{abstract}
 We determine necessary and sufficient conditions for unicritical polynomials to be dynamically irreducible over finite fields. This result extends the results of Boston-Jones and Hamblen-Jones-Madhu regarding the dynamical irreducibility of particular families of unicritical polynomials. We also investigate dynamical irreducibility conditions for cubic and shifted linearized polynomials. 
\end{abstract}
\maketitle

\section{Introduction}
Let $K$ be a field and $f(x) \in K[x]$ a polynomial with degree at least $2$. We will consider the reducibility of \[f^n(x) \vcentcolon=(\underbrace{f\circ f\circ \ldots \circ f}_{n\ \text{times}})(x).\] We say a polynomial $f$ is \emph{dynamically irreducible} (or \textit{stable}) if $f^n$ is irreducible for all positive integers $n$ over $K$. Similarly, $f$ is \emph{eventually dynamically irreducible} (or \textit{eventually stable}) if there exists positive integers $r, N$ such that $f^{N+n}=g_1(f^n)\dots g_r(f^n)$ and $g_i(f^n)$ is irreducible for all non-negative integers $n$. In other words, $f$ is eventually dynamically irreducible if the number of irreducible factors of $f^n$ stabilizes. For $a\in K$ we call $(f,a)$ a \textit{dynamically irreducible pair} if $f^n(x)-a$ is irreducible for all positive integers $n$. Note that the dynamical irreducibility of $f$ is equivalent to the dynamical irreducibility of the pair $(f,0)$. See \cite[Section 19]{trends} for a brief overview of dynamical irreducibility.

One motivation for studying eventual stability comes from arboreal Galois representations. For a rational map $f(x)\in K(x)$ and an element $a\in K$, the \textit{backward orbit} of $a$ under $f$, \[\mathcal{O}_f^-(a)=\{b\in \bar{K}:f^n(b)=a \text{ for some } n\in \mathbb{Z}_{\geq0}\},\] has a natural structure as a regular $\deg f$-ary rooted tree, let $T$ denote this tree. The absolute Galois group of $K$ acts on the backward orbit in a way that preserves the tree structure, inducing a group homomorphism \[\rho:\Gal(\bar{K}/K)\rightarrow \Aut(T),\] where $\Gal(\bar{K}/K)$ denotes the absolute Galois group of $K$ and $\Aut(T)$ denotes the automorphism group of $T$. This map is called an \textit{arboreal Galois representation}. See \cite{BJ-AGR2}, \cite{BJ}, \cite{Jones_survey} for a detailed discussion. The image of this representation acts transitively on the $n$-th level of the tree (i.e. on the roots of $f^n(x)-a$) for all $n$ if and only if the pair $(f,a)$ is dynamically irreducible.

Further, if $K$ is a number field, the factorization type of a polynomial in residue fields of $K$ provides information about the action of Frobenius elements on the roots of the polynomial. Let $K$ be a number field and $f$ a polynomial of degree $d\geq 2$ over $K$. Denote by $K_n$ the splitting field of $f^n(x)-a$ over $K$. If the ideal $\mathfrak{p}$ is prime in the ring of integers, $\mathcal{O}_K$, of $K$, then the residue field, $F_{\mathfrak{p}}:=\mathcal{O}_K/\mathfrak{p}$, is finite. The image of the Frobenius class, $\rho(\mathrm{Frob}_{\mathfrak{p}})$, can be described by its image in  $\mathrm{Gal}(K_n/K)$ for each $n$.  For any irreducible polynomial $g\in K[x]$, let $\bar{g} \in F_{\mathfrak{p}}$ denote the reduced polynomial mod $\mathfrak{p}$. If $\bar{g}$ is separable and $\deg \bar{g}=\deg g$, then the cycle structure of the action of $\mathrm{Frob}_{\mathfrak{p}}$ on the roots of $g$ is given by the factorization type of $\bar{g}$ in $F_{\mathfrak{p}}[x]$. Therefore, understanding the factorization of $f^n(x)-a$ over finite fields provides information about the images of Frobenius classes \cite{BJ}. 

In \cite{BJ}, Jones and Boston determine necessary and sufficient conditions for stability in the case where $f(x)$ is a degree $2$ polynomial defined over a finite field $\FF_q$ for $q = p^s$ where $p$ is an odd prime and $s$ is a positive integer. Their work is extended to polynomials of the form $x^d+c$ in \cite{Hamblen_et_al.}. Using similar techniques, we extend their result to all unicritical polynomials in Theorem \ref{prop:unicritical}. As a special case of this theorem, we deduce the following for unicritical polynomials of prime degree.

\begin{theorem}\label{thm:maintheorem}
Let $r$ be a prime and $q$ a prime power with $q \equiv 1 \mod r$. Further,
  let $f(x)\in \mathbb{F}_q[x]$ be a degree $r$ unicritical polynomial with critical point $\gamma$ and lead coefficient $a$. Then $f$ is dynamically irreducible over $\FF_q$ if and only if \[
\left\lbrace -\frac{f(\gamma)}{a}\right\rbrace \bigcup \left\lbrace \frac{f^n(\gamma)}{a}: n\in \mathbb{Z}_{>1}\right\rbrace.
\]  contains no $r$-th powers in $\mathbb{F}_q$. 
\end{theorem}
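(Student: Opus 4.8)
The plan is to argue by induction on the iterate level $n$, specializing the argument behind Theorem~\ref{prop:unicritical} to the case where prime degree makes the relevant binomial criterion transparent. Since $q\equiv 1\pmod r$ forces $p\nmid r$, a degree-$r$ unicritical polynomial with critical point $\gamma$ and lead coefficient $a$ must have the form $f(x)=a(x-\gamma)^r+f(\gamma)$. I will show that $f^n$ is irreducible over $\FF_q$ for every $n\ge 1$ if and only if the displayed set contains no $r$-th power; whenever $f^n$ is irreducible, let $K_n=\FF_q(\alpha)=\FF_{q^{r^n}}$ for $\alpha$ a root of $f^n$.

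For the recursive step, note first that $f^{n+1}(x)=f^n(f(x))$, so reducibility of $f^n$ forces reducibility of $f^{n+1}$; conversely, if $f^n$ is irreducible with root $\alpha$ and $\beta$ is a root of $f^{n+1}$ with $f(\beta)=\alpha$, then $K_n\subseteq\FF_q(\beta)$ and $[\FF_q(\beta):K_n]\le r$, with equality precisely when $f(x)-\alpha$ is irreducible over $K_n$; in that case $[\FF_q(\beta):\FF_q]=r^{n+1}=\deg f^{n+1}$, so $f^{n+1}$ is irreducible. Writing $f(x)-\alpha=a\big((x-\gamma)^r-\theta\big)$ with $\theta=(\alpha-f(\gamma))/a$, and using that for prime $r$ the binomial $y^r-\theta$ is irreducible over $K_n$ exactly when $\theta\notin(K_n^{*})^r$, the step reduces to deciding whether $\theta$ is an $r$-th power in $K_n$.

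This is where the descent to $\FF_q$ and the hypothesis $q\equiv 1\pmod r$ enter. As $K_n^{*}$ is cyclic of order $q^{r^n}-1$ and $r\mid q-1\mid q^{r^n}-1$, one has $\theta\in(K_n^{*})^r$ iff $\theta^{(q^{r^n}-1)/r}=1$; since $\N_{K_n/\FF_q}(\theta)=\theta^{(q^{r^n}-1)/(q-1)}$, this is in turn equivalent to $\N_{K_n/\FF_q}(\theta)$ being an $r$-th power in $\FF_q$. Because $f^n$ is, up to its lead coefficient $\ell_n=a^{(r^n-1)/(r-1)}$, the minimal polynomial of $\alpha$, evaluating at $f(\gamma)$ gives $\N_{K_n/\FF_q}(\alpha-f(\gamma))=(-1)^{r^n}f^n(f(\gamma))/\ell_n=(-1)^{r^n}f^{n+1}(\gamma)/\ell_n$, so $\N_{K_n/\FF_q}(\theta)=(-1)^{r^n}f^{n+1}(\gamma)/(\ell_n a^{r^n})$. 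Reducing modulo $(\FF_q^{*})^r$: $a^{r^n}$ is an $r$-th power; $(r^n-1)/(r-1)=1+r+\cdots+r^{n-1}\equiv 1\pmod r$, so $\ell_n\equiv a$; and $(-1)^{r^n}$ is an $r$-th power, being $(-1)^r$ for odd $r$ and $1$ when $r=2$, $n\ge 1$. Hence $\theta\in(K_n^{*})^r$ iff $f^{n+1}(\gamma)/a\in(\FF_q^{*})^r$. The base case $n=0$ (where $\alpha=0$, $K_0=\FF_q$) says $f$ is irreducible iff $-f(\gamma)/a\notin(\FF_q^{*})^r$. Unrolling the induction, $f$ is dynamically irreducible iff $-f(\gamma)/a$ is not an $r$-th power and $f^n(\gamma)/a$ is not an $r$-th power for any $n>1$, which is exactly the claimed condition.

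The finite-field bookkeeping (cyclicity of the multiplicative groups, the norm-as-exponentiation identity) is routine; the delicate points are pinning down the sign and lead-coefficient factors in $\N_{K_n/\FF_q}(\alpha-f(\gamma))=\pm f^{n+1}(\gamma)/\ell_n$, and then checking that each of $a^{r^n}$, $\ell_n/a$, and $(-1)^{r^n}$ is an $r$-th power in $\FF_q$ --- precisely the places where primality of $r$ and $q\equiv 1\pmod r$ are indispensable, and what makes the prime-degree statement so much cleaner than Theorem~\ref{prop:unicritical}. One should also dispatch the degenerate cases $f(\gamma)=0$ and $f^m(\gamma)=0$: the first makes $f$ reducible outright, and the second cannot occur while $f^{m-1}$ is irreducible since the nonzero element $\alpha-f(\gamma)$ has nonzero norm, so treating $0$ as an $r$-th power keeps the statement correct.
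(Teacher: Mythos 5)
Your argument is correct and follows essentially the same route as the paper's: a Capelli-type reduction of the irreducibility of $f^{n+1}$ to that of a degree-$r$ binomial over $\FF_{q^{r^n}}$, the Vahlen--Capelli criterion for that binomial, and a descent to $\FF_q$ via the norm map, with the norm of $\alpha-f(\gamma)$ evaluated as $\pm f^{n+1}(\gamma)/\ell_n$ and the sign and lead-coefficient factors absorbed into $r$-th powers exactly as in Lemma~\ref{lem:BJ2.5ext}. The only differences are organizational --- you specialize to prime degree from the outset and write $f(x)=a(x-\gamma)^r+f(\gamma)$ directly instead of conjugating the critical point to $0$ and invoking Theorem~\ref{prop:unicritical}, and you treat the degenerate case $f^m(\gamma)=0$ explicitly, which the paper leaves implicit.
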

This criteria provides an effective method of finding examples of dynamically irreducible polynomials, as shown in Example \ref{ex:unicrit}.

G\'{o}mez-P\'{e}rez,  Nicol\'{a}s, Ostafe, and Sadornil, prove a necessary, but not sufficient, condition for dynamical irreducibility of polynomials by relating the parity of the number of factors of the polynomial to whether or not its discriminant is a square \cite[Theorem 3.2]{Ostafe_et_al.}, see Theorem \ref{thm:3.2ostafe} below. Using their result and a result of Dickson \cite[Theorem 3]{Dickson} which gives necessary and sufficient conditions for irreducibility of cubic polynomials, we create the following recursive test for dynamical irreducibility at the $n$th iterate for cubic polynomials. 

\begin{proposition}\label{prop:cubic} Let $p>3$ be an odd prime and $q=p^s$.
    Let $h(x)=b_3x^3+b_1x+b_0 \in \FF_q[x]$ and $\beta_0\in \FF_q$. Let $\gamma_1, \gamma_2$ denote the critical points of $h$ and for each $n$ let $\beta_n$ be a root of $h^n(x)-\beta_0$. Then the pair $(h,\beta_0)$ is dynamically irreducible over $\FF_q$  if and only if 
\begin{enumerate}
    \item\label{cond1} $-3(h^{n+1}(\gamma_1)-\beta_0)(h^{n+1}(\gamma_2)-\beta_0)$ is a non-zero square for all $n\geq 0$, and
    \item\label{cond2} for $\mu_n\in \FF_q(\beta_n)^\times$ such that $-4\left(\frac{b_1}{b_3}\right)^3-27\left(\frac{b_0-\beta_n}{b_3}\right)^2=81\mu^2$, we have $$\frac{1}{2}\left(-\left(\frac{b_0-\beta_n}{b_3}\right)+\mu_n\sqrt{-3}\right)$$ is not a cube in $\FF_q(\sqrt{-3},\beta_n)$ for all $n\geq 0$.
\end{enumerate}
\end{proposition}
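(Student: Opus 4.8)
The plan is to peel off one iterate at a time. Writing $H_m(x):=h^m(x)-\beta_0$, the identity $H_{m+1}(x)=H_m(h(x))=b_3^{(3^m-1)/2}\prod_{\beta'}\bigl(h(x)-\beta'\bigr)$, with the product over the roots $\beta'$ of $H_m$, shows that once $H_n$ is irreducible over $\FF_q$ with root $\beta_n$ the $3^n$ pairwise coprime cubic factors $h(x)-\beta'$ are permuted transitively by $\Gal(\overline{\FF_q}/\FF_q)$; hence $H_{n+1}$ is irreducible over $\FF_q$ if and only if $h(x)-\beta_n$ is irreducible over $L_n:=\FF_q(\beta_n)\cong\FF_{q^{3^n}}$. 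Iterating, $(h,\beta_0)$ is dynamically irreducible if and only if for every $n\ge 0$ the cubic $h(x)-\beta_n=b_3x^3+b_1x+(b_0-\beta_n)$ is irreducible over $L_n$; this will be established by induction on $n$, so at each stage $L_n$ really is a field.

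Next I would invoke the form of Dickson's criterion \cite[Theorem~3]{Dickson} valid over a finite field $\FF_\ell$ of characteristic $>3$: the cubic $b_3x^3+b_1x+c$ is irreducible over $\FF_\ell$ if and only if (a) its discriminant $-4b_3b_1^3-27b_3^2c^2$ is a nonzero square in $\FF_\ell$, and (b) the Cardano resolvent $\tfrac12\bigl(-\tfrac{c}{b_3}+\mu\sqrt{-3}\bigr)$, where $\mu\in\FF_\ell$ satisfies $-4(b_1/b_3)^3-27(c/b_3)^2=81\mu^2$, is not a cube in $\FF_\ell(\sqrt{-3})$. Condition (a) is precisely the requirement that the Galois group of the cubic lie in $A_3$, i.e.\ that the cubic either splits completely or is irreducible; and when (a) holds, solving by radicals --- treating $\ell\equiv 1$ and $\ell\equiv 2\pmod 3$ separately, and using that $\FF_\ell(\sqrt{-3})$ contains the cube roots of unity --- shows that the cubic splits precisely when the resolvent is a cube in $\FF_\ell(\sqrt{-3})$. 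Applying this with $\FF_\ell=L_n$ and $c=b_0-\beta_n$, and noting $L_n(\sqrt{-3})=\FF_q(\sqrt{-3},\beta_n)$ (as $[L_n:\FF_q]=3^n$ is odd), condition (b) is exactly \eqref{cond2}.

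It remains to match condition (a), at every level, with \eqref{cond1}. Since $h(x)-\beta_n$ has the same critical points $\gamma_1,\gamma_2$ as $h$, one has $\Disc\bigl(h(x)-\beta_n\bigr)=-27\,b_3^2\,(h(\gamma_1)-\beta_n)(h(\gamma_2)-\beta_n)$, so (a) at level $n$ amounts to $-3\,(h(\gamma_1)-\beta_n)(h(\gamma_2)-\beta_n)$ being a nonzero square in $L_n$. An element $z\in L_n^\times$ is a square in $L_n$ if and only if $\N_{L_n/\FF_q}(z)$ is a square in $\FF_q$, since $z^{(q^{3^n}-1)/2}=\N_{L_n/\FF_q}(z)^{(q-1)/2}$. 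Evaluating $h^n(y)-\beta_0=b_3^{(3^n-1)/2}\prod_{\beta'}(y-\beta')$ at $y=h(\gamma_j)$ gives $\prod_{\beta'}\bigl(h(\gamma_j)-\beta'\bigr)=b_3^{-(3^n-1)/2}\bigl(h^{n+1}(\gamma_j)-\beta_0\bigr)$, and therefore
\[
\N_{L_n/\FF_q}\!\bigl(-3(h(\gamma_1)-\beta_n)(h(\gamma_2)-\beta_n)\bigr)=(-3)^{3^n}\,b_3^{-(3^n-1)}\,\bigl(h^{n+1}(\gamma_1)-\beta_0\bigr)\bigl(h^{n+1}(\gamma_2)-\beta_0\bigr).
\]
As $3^n-1$ is even, $(-3)^{3^n-1}$ and $b_3^{-(3^n-1)}$ are squares, so the right-hand side is a nonzero square in $\FF_q$ if and only if $-3\bigl(h^{n+1}(\gamma_1)-\beta_0\bigr)\bigl(h^{n+1}(\gamma_2)-\beta_0\bigr)$ is. Hence (a) at level $n$ is equivalent to \eqref{cond1} at level $n$; equivalently, \eqref{cond1} asserts that $\Disc(h^m-\beta_0)$ is a nonzero square in $\FF_q$ for every $m\ge 1$, which is also the necessary condition furnished by Theorem~\ref{thm:3.2ostafe}.

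With these pieces the proof would close by induction on $n$. For necessity: if $(h,\beta_0)$ is dynamically irreducible then, by the first paragraph, $h(x)-\beta_n$ is irreducible over $L_n$ for every $n$, so Dickson's criterion forces (a) and (b) at every level, i.e.\ \eqref{cond1} and \eqref{cond2}. For sufficiency: assume \eqref{cond1} and \eqref{cond2}, and suppose inductively that $H_n$ is irreducible (trivial for $n=0$, as $H_0=x-\beta_0$); then \eqref{cond1} at level $n$ provides hypothesis (a) of Dickson's criterion for $h(x)-\beta_n$ over $L_n$ and \eqref{cond2} at level $n$ provides hypothesis (b), whence $h(x)-\beta_n$ is irreducible over $L_n$ and so $H_{n+1}$ is irreducible over $\FF_q$. \textbf{The step I expect to be the main obstacle} is the bookkeeping behind the second and third paragraphs: extracting the exact form of Dickson's criterion --- in particular the correct branch of the Cardano resolvent (including the degenerate case $b_1=0$) and the separate behaviour for $\ell\equiv 1$ versus $\ell\equiv 2\pmod 3$ --- and then checking that the accumulated powers of $b_3$ and of $3$, together with all the signs, collapse to exactly the factor $-3$ appearing in \eqref{cond1}, uniformly in $n$ rather than with an $n$-dependent sign.
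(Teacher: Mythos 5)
Your proposal is correct and follows essentially the same route as the paper's proof: reduce to the irreducibility of $h(x)-\beta_n$ over $\FF_q(\beta_n)$ via Capelli's Lemma, apply Dickson's criterion to the (normalized) cubic, and transfer the square condition from $\FF_q(\beta_n)$ down to $\FF_q$ by a norm computation that collapses, modulo squares, to the factor $-3$ in Condition (1). The only differences are presentational --- you re-derive Capelli's Lemma and the norm-of-discriminant identity from the explicit factorization of $h^n(x)-\beta_0$, where the paper cites Lemma \ref{Lemma:Capelli's}, Lemma \ref{lem:rthroot} and the resultant formulas --- and the bookkeeping you flag as the main obstacle is in fact carried out correctly in your third paragraph.
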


We apply these criteria for two cubic polynomials in Example \ref{ex:cubiccubes} and Example \ref{ex: cubicsquares}. Additionally, we give a family of polynomials for which there is a simple test for dynamical irreducibility using a result of Chu \cite{Chu}, which relies on Dickson's theorem, and we apply this test in Example \ref{ex:Chu}.
In general, there are few polynomials that are dynamically irreducible over a finite field. For instance, in Remark \ref{rmk:reducibility}, we see some values of $p$ for which there are no dynamically irreducible cubics of the form $h(x) = x^3+b_1x+b_0$ over $\mathbb{F}_p$. 
In Example \ref{ex:Propcubic}, we provide a polynomial that is dynamically irreducible.  

We also examine the dynamical irreducibility of shifted linearized polynomials, expanding an argument of Ahmadi et al \cite{Ahmadi_et_al} for quadratic polynomials which is also used in \cite{Ostafe_et_al.} for cubic polynomials, to show the second iterate must be reducible, unless $\operatorname{char} K = \deg f=2$, in which case the third iterate must be reducible.

Dynamical irreducibility of polynomials has been studied extensively in the quadratic case over finite fields \cite{goksel2023note, GokselXiaBoston, GomezNicolas, OstafeShparlinski} as well as over $\QQ$ \cite{Goksel, DS, Merai_et_al., demarketal}. Darwish and Sadek \cite{DS} find a family of polynomials (Dumas polynomials) which they prove, under some mild conditions, are dynamically irreducible. Further, they show that if the family of iterates of a polynomial contains a Dumas polynomial, then the polynomial must be dynamically irreducible over $\QQ.$  In \cite{Heath-BrownMicheli}, the authors study irreducibility of compositions of quadratic polynomials. 

\subsection{Outline} In Section \ref{sec:background}, we review definitions and known results related to dynamical irreduciblity that we build off of in the remainder of the paper. In Section \ref{sec:unicrit}, we prove Theorem \ref{thm:maintheorem} and establish this result as a generalization of \cite[Proposition 2.3]{BJErrata} and \cite[Theorem 8]{Hamblen_et_al.}. In Section \ref{sec:Cubic polynomials}, we summarize some known results about dynamical irreducibility of cubic polynomials along with some computational examples. Finally, in Section \ref{Sec:Artin}, we study shifted linearized polynomials and show that a shifted linear degree $p$ polynomial over a field of characteristic $p$ must have reducible second iterate for $p\neq 2$ and reducible third iterate when $p=2$.

\section{Background and Preliminaries}\label{sec:background}

Throughout the paper we will consider $f(x)$ a polynomial over a field $K$. We will primarily focus on polynomials over finite fields, where we will write $K=\mathbb{F}_q$ where $q = p^s$ for some prime $p$ and positive integer $s$. 

Conjugation preserves many dynamical properties of a map, including, as we will see in the following lemma, dynamical irreducibility in some sense. We say a polynomial $f \in K[x]$ is \textit{conjugate} to $g \in \overline{K}[x]$ if there exists some $\varphi(x) = ax+b\in \PGL_2(\overline{K})$ such that $g = f^\varphi := \varphi\circ f \circ \varphi^{-1}.$ This conjugation commutes with iteration, so $(f^\varphi)^n = (f^n)^\varphi$, and thus preserves the dynamics of corresponding points. 

\begin{lemma}\label{Lemma: Conjugation}
Let $f(x) \in K[x]$ and $\varphi(x) = cx+\alpha \in \PGL_2(K)$. Define $g(x) = f^\varphi(x)$. Then, $f^n(x)$ is irreducible over $K$ if and only if $g^n(x)-\alpha$ is irreducible over $K$. In particular, $f$ is dynamically irreducible over $K$ if and only if the pair $(g,\alpha)$ is dynamically irreducible. 
\end{lemma}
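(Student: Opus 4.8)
The plan is to reduce everything to the single observation that an affine change of variables defined over $K$ is a $K$-algebra automorphism of $K[x]$ and therefore preserves irreducibility. First I would record the explicit formulas: since $\varphi(x) = cx+\alpha$ with $c \in K^\times$, its inverse $\varphi^{-1}(x) = (x-\alpha)/c$ is again affine and defined over $K$, and because conjugation commutes with iteration (as noted just above the statement) we have $g^n = (f^n)^\varphi$, hence
\[
g^n(x) - \alpha \;=\; \varphi\bigl(f^n(\varphi^{-1}(x))\bigr) - \alpha \;=\; c\, f^n\!\left(\frac{x-\alpha}{c}\right).
\]

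Next I would invoke the elementary fact that for $c \in K^\times$ and $\alpha \in K$ the substitution $x \mapsto (x-\alpha)/c$ induces a $K$-algebra automorphism of $K[x]$, so a polynomial $h(x)$ is irreducible over $K$ if and only if $h((x-\alpha)/c)$ is; and multiplying by the nonzero constant $c$ does not change the principal ideal generated, so $h((x-\alpha)/c)$ is irreducible over $K$ if and only if $c\,h((x-\alpha)/c)$ is. Applying this with $h = f^n$ and combining with the displayed identity yields that $f^n(x)$ is irreducible over $K$ if and only if $g^n(x) - \alpha$ is irreducible over $K$.

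For the ``in particular'' clause I would simply quantify the equivalence over $n$: by definition $f$ is dynamically irreducible over $K$ exactly when $f^n(x)$ is irreducible over $K$ for every $n \geq 1$, which by the equivalence just established holds if and only if $g^n(x) - \alpha$ is irreducible over $K$ for every $n \geq 1$, i.e.\ if and only if $(g,\alpha)$ is a dynamically irreducible pair.

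I do not expect a genuine obstacle here; the proof is essentially a bookkeeping exercise. The one point that needs a little care is handling $\varphi^{-1}$ correctly --- verifying that it is again affine and defined over $K$ (which is precisely where the hypothesis that $\varphi$ has the special form $cx+\alpha$, rather than being a general element of $\PGL_2(\overline{K})$, is used), so that the substitution genuinely is a $K$-algebra automorphism of $K[x]$ and not merely a rational change of coordinates.
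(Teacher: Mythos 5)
Your proposal is correct and follows essentially the same route as the paper: both derive the identity $g^n(x)-\alpha = c\,f^n\!\left(\frac{x-\alpha}{c}\right)$ from $(f^\varphi)^n = (f^n)^\varphi$ and then observe that an affine substitution over $K$ together with scaling by $c\in K^\times$ preserves irreducibility. The only cosmetic difference is the justification of that last step --- the paper appeals to the polynomials having the same splitting field, while you invoke the (arguably cleaner) fact that the substitution is a $K$-algebra automorphism of $K[x]$.
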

\begin{proof}
Since $g^n(x) = (f^\varphi)^n(x)=(f^n)^\varphi(x)$, then $g^n(x)-\alpha= cf^n\left(\frac{x-\alpha}{c}\right)$. Since they have the same splitting field, the polynomial $f^n(x)$ is irreducible if and only if $f^n\left(\frac{x-\alpha}{c}\right)$ is irreducible, which is clearly irreducible if and only if $g^n(x)-\alpha$ is irreducible. 
\end{proof}

We demonstrate the utility of Lemma \ref{Lemma: Conjugation} in the quadratic case. This technique will be especially useful in the proof of Theorem \ref{thm:maintheorem}, where we will conjugate unicritical polynomials to polynomials with critical point $0$.
\begin{example}
For $f(x)=ax^2+bx+c$, define $h(x)=f(x-\frac{b}{2a})+\frac{b}{2a} = ax^2-\frac{b^2}{4a}+c+\frac{b}{2a}$. To check whether $f^n(x)$ is irreducible, it suffices to check whether $h^n(x)-\frac{b}{2a}$ is irreducible.
\end{example}

Lemma \ref{Lemma: Conjugation} also implies that the irreducibility of a polynomial is preserved when the polynomials is conjugated to a monic form. Specifically, let $f(x)=a_dx^d+a_{d-1}x^{d-1}+\dots+a_0$ be a degree $d$ polynomial where $a_d$ is a $d-1^{st}$ power in $K$, say $a_d=c^{d-1}$ for $c \in K^\times$. Then $g(x)=cf(\frac{x}{c})$ is monic, and using Lemma \ref{Lemma: Conjugation} with $\alpha = 0$ we have that $f^n(x)$ is irreducible over $K$ if and only if $g^n(x)$ is irreducible over $K$. 

\begin{example}\label{ex:cubicleadcoeff}
    Let $f(x)=a_3x^3+a_2x^2+a_1x+a_0 \in \mathbb{F}_p[x]$ for some prime $p$ with $a_3 \neq 0$. If $a_3$ is a square, then for $c \in \mathbb{F}_p^\times$ such that $c^2 = a_3$, the dynamical irreducibility of $f(x)$ is equivalent to checking dynamical irreducibility of the monic polynomial $g(x)=cf(\frac{x}{c})=x^3+ \frac{a_2}{c}x^2+a_1x+ca_0$. If $a_3$ is not a square, then for any  $c \in \FF_p^\times$, the leading coefficient of $cf(\frac{x}{c})$ is $\frac{a_3}{c^2}$, which is also not a square. 
\end{example}

There are a number of classical results that can be used to determine whether a polynomial and its iterates are irreducible. A powerful tool in proving dynamical irreducibility of a polynomial is Capelli's Lemma, which gives necessary and sufficient conditions for irreducibility of a composition of polynomials. 

    \begin{lemma}[Capelli's Lemma]\label{Lemma:Capelli's} Let $K$ be a field, $f(x), g(x)\in K[x]$, and let $\beta\in\bar{K}$ be any root of $g(x)$. Then $g(f(x))$ is irreducible over $K$ if and only if both $g$ is irreducible over $K$ and $f(x)-\beta$ is irreducible over $K(\beta)$.
    \end{lemma}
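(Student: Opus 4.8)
The plan is to prove both implications by tracking degrees in the tower $K \subseteq K(\beta) \subseteq K(\alpha)$, where $\alpha \in \bar{K}$ is chosen to be a root of $f(x)-\beta$. Then $f(\alpha) = \beta$, hence $g(f(\alpha)) = g(\beta) = 0$, so $\alpha$ is a root of $g(f(x))$; and since $\beta = f(\alpha) \in K(\alpha)$, this really is a tower of fields. The key numerical identity driving everything is $\deg(g \circ f) = \deg g \cdot \deg f$, together with the multiplicativity of field degrees in towers.

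For the ``only if'' direction, assume $g(f(x))$ is irreducible over $K$. First, $g$ itself must be irreducible over $K$: any factorization $g = g_1 g_2$ with $\deg g_i \geq 1$ would give $g(f(x)) = g_1(f(x)) g_2(f(x))$, a nontrivial factorization. Hence $[K(\beta):K] = \deg g$. Since $g(f(x))$ is irreducible and vanishes at $\alpha$, it is a scalar multiple of the minimal polynomial of $\alpha$ over $K$, so $[K(\alpha):K] = \deg(g\circ f) = \deg g \cdot \deg f$. The tower law then forces $[K(\alpha):K(\beta)] = \deg f$. But $\alpha$ is a root of $f(x)-\beta \in K(\beta)[x]$, which has degree exactly $\deg f$; since $[K(\alpha):K(\beta)]$ already equals $\deg f$, this polynomial is the minimal polynomial of $\alpha$ over $K(\beta)$ up to a scalar, hence irreducible over $K(\beta)$.

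For the ``if'' direction, assume $g$ is irreducible over $K$ and $f(x)-\beta$ is irreducible over $K(\beta)$. Then $[K(\beta):K] = \deg g$, and since $f(x)-\beta$ is the minimal polynomial of $\alpha$ over $K(\beta)$ up to a scalar, $[K(\alpha):K(\beta)] = \deg f$. Multiplying through the tower gives $[K(\alpha):K] = \deg g \cdot \deg f = \deg(g\circ f)$. Since $\alpha$ is a root of $g(f(x))$, its minimal polynomial over $K$ divides $g(f(x))$; but that minimal polynomial has degree $\deg(g\circ f) = \deg g(f(x))$, so it equals $g(f(x))$ up to a scalar, proving $g(f(x))$ is irreducible over $K$.

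The argument is essentially bookkeeping with field degrees, so there is no single hard step; the points requiring care are (i) confirming $K(\beta) \subseteq K(\alpha)$ so that the tower law applies, (ii) recalling that an irreducible polynomial vanishing at an algebraic element is its minimal polynomial only up to a nonzero scalar, so one should not assume $f$ or $g$ monic, and (iii) noting that the choice of root $\beta$ of $g$ is immaterial: any two roots of $g$ are conjugate over $K$, and irreducibility of $f(x)-\beta$ over $K(\beta)$ is preserved by the corresponding field isomorphism.
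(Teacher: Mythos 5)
The paper states Capelli's Lemma as a classical result and gives no proof of its own, so there is nothing to compare against line by line. Your argument is the standard tower-law proof and it is correct: introducing a root $\alpha$ of $f(x)-\beta$, observing $K \subseteq K(\beta) \subseteq K(\alpha)$, and matching $[K(\alpha):K]$ against $\deg g \cdot \deg f$ in both directions is exactly how this lemma is usually established, and your cautionary remarks (non-monic polynomials only determine minimal polynomials up to scalars; the choice of $\beta$ is immaterial since the roots of the irreducible $g$ are conjugate) cover the only places where care is needed. The one hypothesis you use silently is $\deg f \geq 1$ so that the factorization $g_1(f)g_2(f)$ is genuinely nontrivial, but this is harmless since $\deg f = 0$ makes $g(f(x))$ constant and the statement vacuous, and in the paper's applications $\deg f \geq 2$.
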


In order to state other existing results we first recall some basic definitions. Let $f$ and $g$ be polynomials over a field $K$ of degrees $d$ and $k$, respectively, with leading coefficients $a_d$ and $b_k$. Let $\lambda_1,\dots,\lambda_d$ and $\mu_1,\dots,\mu_k$ denote the roots of $f$ and $g$ respectively, counted with multiplicity. Then the \textit{resultant}, Res$(f,g)$, is defined as
    \[\Res(f,g)= a_d^{k}b_k^{d}\prod_{1\leq i\leq d, 1\leq j\leq k} (\lambda_i-\mu_j)= (-1)^{dk}b_k^d\prod_{1\leq j\leq k} f(\mu_j).\]
    The \textit{discriminant} of $f$, denoted by Disc$(f)$, is defined as 
    \[\text{Disc}(f)=a^{2d-2}_d \prod_{i<j}(\lambda_i -\lambda_j)^2 = (-1)^{d(d-1)/2}a_d^{d-k-2}\Res (f, f')\]
    where $f'$ is the derivative of $f$ and $k=\deg f'$. The discriminant and resultant are elements of the field $K$.
    
Let $L/K$ be a separable extension of fields and let $\sigma_1,\dots,\sigma_r$ be the distinct embeddings of $L$ into an algebraic closure of $K$. The \textit{norm} and \textit{trace} of an element $\alpha \in K$ from $L$ to $K$ are defined, respectively, as 
\[
N_{L/K}(\alpha)=\prod_{i=1}^r\sigma_i(\alpha)\ \text{and}\ \Tr_{L/K}(\alpha)=\sum_{i=1}^r\sigma_i(\alpha).
\]
Note, if $\alpha\in K$, then $\Tr_{L/K}(\alpha) = [L:K]\cdot \alpha$. Hence, if $\operatorname{char} K \mid [L:K]$, then $\Tr_{L/K}(\alpha)=0$, this will be important to our arguments in Section \ref{Sec:Artin}. We make use of the following formulas for norms and traces in towers of extensions.

\begin{theorem}[Theorem 5.1, Chapter VI, \cite{lang02}]\label{Lemma: Transitivity of Norm and Trace}
    Let $K$ be a finite field, $F$ be a finite extension of $K$ and $E$ be a finite extension of $F$. Then, for all $\alpha \in E$, we have
    \begin{enumerate}
    \item $\N_{E/K} = \N_{F/K}\left(\N_{E/F}(\alpha)\right)$
        \item $\Tr_{E/K}(\alpha) = \Tr_{F/K}\left(\Tr_{E/F}(\alpha)\right)$
    \end{enumerate}
\end{theorem}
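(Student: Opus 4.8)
The plan is to prove both identities at once by describing the $K$-embeddings of $E$ into a fixed algebraic closure $\bar{K}$ of $K$ (which we take to contain $F$ and $E$) in terms of the embeddings appearing in the definitions of $\N_{F/K}$ and $\N_{E/F}$. Since $K$ is finite, it is perfect, so every finite extension of $K$ is separable; in particular $F/K$ and $E/F$ are separable and the norm and trace are defined through the full set of embeddings exactly as in the excerpt. Write $[F:K]=m$ with distinct $K$-embeddings $\tau_1,\dots,\tau_m\colon F\to\bar{K}$, and $[E:F]=n$ with distinct $F$-embeddings $\sigma_1,\dots,\sigma_n\colon E\to\bar{K}$. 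By multiplicativity of degrees $[E:K]=mn$, so there are exactly $mn$ distinct $K$-embeddings of $E$ into $\bar{K}$.

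First I would exhibit these $mn$ embeddings explicitly. Because $\bar{K}$ is algebraically closed, hence normal over $K$, each $\tau_i$ extends to an automorphism $\tilde\tau_i$ of $\bar{K}$. I claim the maps $\tilde\tau_i\circ\sigma_j$, for $1\le i\le m$ and $1\le j\le n$, are precisely the $mn$ distinct $K$-embeddings of $E$. Each is a $K$-embedding, since $\sigma_j$ fixes $F\supseteq K$ and $\tilde\tau_i$ fixes $K$. For distinctness, observe that the restriction of $\tilde\tau_i\circ\sigma_j$ to $F$ equals $\tilde\tau_i|_F=\tau_i$ (as $\sigma_j$ fixes $F$), so $\tilde\tau_i\circ\sigma_j=\tilde\tau_{i'}\circ\sigma_{j'}$ forces $i=i'$, and then $\sigma_j=\sigma_{j'}$ gives $j=j'$. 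Having produced $mn$ distinct $K$-embeddings when there are exactly $mn$ of them, the list is complete.

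With the embeddings indexed this way, both identities follow by regrouping the product and the sum. The key point for the norm is that $\N_{E/F}(\alpha)=\prod_{j}\sigma_j(\alpha)$ lies in $F$, so $\tilde\tau_i$ acts on it through its restriction $\tau_i$ to $F$. Then
\[
\N_{E/K}(\alpha)=\prod_{i=1}^m\prod_{j=1}^n(\tilde\tau_i\circ\sigma_j)(\alpha)=\prod_{i=1}^m\tilde\tau_i\!\left(\prod_{j=1}^n\sigma_j(\alpha)\right)=\prod_{i=1}^m\tau_i\big(\N_{E/F}(\alpha)\big)=\N_{F/K}\big(\N_{E/F}(\alpha)\big).
\]
The trace identity is identical with products replaced by sums, using $\Tr_{E/F}(\alpha)=\sum_j\sigma_j(\alpha)\in F$ so that again $\tilde\tau_i$ acts by $\tau_i$, yielding $\Tr_{E/K}(\alpha)=\sum_i\tau_i\big(\Tr_{E/F}(\alpha)\big)=\Tr_{F/K}\big(\Tr_{E/F}(\alpha)\big)$.

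The main obstacle is the bookkeeping in the second paragraph: confirming that the $\tilde\tau_i\circ\sigma_j$ exhaust the $K$-embeddings of $E$ without repetition. This rests on separability of the tower (to obtain the count $[E:K]=mn$ and the distinctness of the embeddings) together with the fact that $\N_{E/F}(\alpha)$ and $\Tr_{E/F}(\alpha)$ genuinely lie in the intermediate field $F$, which is exactly what permits replacing $\tilde\tau_i$ by $\tau_i$ in the final regrouping. For finite fields one could bypass this abstraction: every extension in sight is Galois with cyclic Galois group generated by an appropriate power of the Frobenius $x\mapsto x^{\lvert K\rvert}$, and the embeddings are simply powers of Frobenius, so the factorization of embeddings and the regrouping become completely explicit.
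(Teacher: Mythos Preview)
Your proof is correct and is essentially the standard argument (the one in Lang's textbook, which the paper cites). Note that the paper itself does not provide a proof of this statement; it simply quotes it as Theorem~5.1 of Chapter~VI of \cite{lang02} and uses it as a black box, so there is nothing in the paper to compare against beyond the reference.
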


The norm function can be used to determine if an element of a finite field $\mathbb{F}_{q^s}$ is an $r$-th power by mapping it to its norm in $\mathbb{F}_{q}$. We state and prove this well-known relationship below. We will use this to study the dynamical irreducibility of unicritical polynomials in Section \ref{sec:unicrit}. 

\begin{lemma}\label{lem:rthroot}
    Let $\FF_{q^s}$ be a finite extension of $\FF_q$ and let $\alpha \in \FF_{q^s}$ and let $r>0$ be an integer with $\gcd(q,r)=1$. Then
    \begin{enumerate}
    \item\label{part1} if $\alpha$ is an $r$-th power in $\FF_{q^s}$,  $N_{\FF_{q^s}/\FF_q}(\alpha)$ is an $r$-th power in $\FF_q$, and
    \item\label{part2} if $q\equiv 1 \mod r$ and $N_{\FF_{q^s}/\FF_q}(\alpha)$ is an $r$-th power in $\FF_q$, then $\alpha$ is an $r$-th power in $\FF_{q^s}$. 
    \end{enumerate}
\end{lemma}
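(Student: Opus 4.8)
The plan is to use the standard fact that the norm map $N = N_{\FF_{q^s}/\FF_q}$ is surjective and multiplicative, together with a counting argument on the kernel of the $r$-th power map. Recall that $N(\alpha) = \alpha^{1 + q + q^2 + \cdots + q^{s-1}} = \alpha^{(q^s-1)/(q-1)}$, so $N$ is a group homomorphism $\FF_{q^s}^\times \to \FF_q^\times$, and it is surjective (either by the classical argument that its kernel has order exactly $(q^s-1)/(q-1)$ by counting roots of $x^{(q^s-1)/(q-1)} - 1$, or by Hilbert 90 / the fact that a nontrivial character sum cannot vanish identically). For part \eqref{part1}, if $\alpha = \delta^r$ with $\delta \in \FF_{q^s}$, then $N(\alpha) = N(\delta)^r$ by multiplicativity, which exhibits $N(\alpha)$ as an $r$-th power in $\FF_q$; the case $\alpha = 0$ is trivial. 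This direction is immediate and requires no hypothesis on $q \bmod r$.

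For part \eqref{part2} I would argue as follows. Assume $q \equiv 1 \bmod r$, so that $r \mid q - 1$ and hence $r \mid q^s - 1$. The $r$-th powers in $\FF_{q^s}^\times$ form the image of the homomorphism $x \mapsto x^r$, whose kernel is the group of $r$-th roots of unity in $\FF_{q^s}^\times$; since $\FF_{q^s}^\times$ is cyclic of order $q^s - 1$ and $r \mid q^s-1$, this kernel has order $\gcd(r, q^s-1) = r$ (using $\gcd(q,r)=1$ to ensure this is the relevant count), so the subgroup of $r$-th powers has index $r$ in $\FF_{q^s}^\times$. Likewise the $r$-th powers in $\FF_q^\times$ have index $r$ in $\FF_q^\times$, since $r \mid q-1$. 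Now suppose $\alpha \in \FF_{q^s}^\times$ with $N(\alpha) = \beta^r$ for some $\beta \in \FF_q^\times$. By surjectivity of $N$, choose $\delta \in \FF_{q^s}^\times$ with $N(\delta) = \beta$. Then $N(\alpha \delta^{-r}) = N(\alpha) N(\delta)^{-r} = \beta^r \beta^{-r} = 1$, so it suffices to show every element of $\ker N$ is an $r$-th power in $\FF_{q^s}^\times$. Since $\FF_{q^s}^\times$ is cyclic, $\ker N$ is the unique subgroup of order $(q^s-1)/(q-1)$, namely the set of $(q-1)$-th powers; and because $r \mid q-1$, every $(q-1)$-th power is in particular an $r$-th power. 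Hence $\alpha\delta^{-r}$ is an $r$-th power, and therefore so is $\alpha$. (Again $\alpha = 0$ is trivial.)

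Alternatively, and perhaps more cleanly, part \eqref{part2} can be packaged as a statement about indices: the norm map induces a surjection $\FF_{q^s}^\times / (\FF_{q^s}^\times)^r \to \FF_q^\times / (\FF_q^\times)^r$ between groups each of order $r$ (here using $r \mid q-1$ for the target and $r \mid q^s - 1$ for the source), and a surjection between finite groups of the same order is an isomorphism; in particular its kernel is trivial, which is exactly the assertion that $N(\alpha)$ being an $r$-th power forces $\alpha$ to be one. I would likely present this version, as it makes transparent where the hypothesis $q \equiv 1 \bmod r$ is used — without it, the target $\FF_q^\times/(\FF_q^\times)^r$ could be smaller than $r$ and the kernel need not be trivial.

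The main obstacle, such as it is, is purely bookkeeping: one must be careful that $r$ need not be prime and that $\gcd(q,r) = 1$ is exactly what guarantees $r$-th roots of unity behave well (so that the relevant subgroup indices are $\gcd(r, q^s-1)$ and $\gcd(r,q-1)$ rather than $r$ outright), and then invoke $q \equiv 1 \bmod r$ to force both of these to equal $r$. No deep input is needed beyond cyclicity of the multiplicative group of a finite field and surjectivity of the norm; the latter is itself elementary via the order computation for $\ker N$.
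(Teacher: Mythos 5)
Your proof is correct and rests on the same ideas as the paper's: part (1) by multiplicativity of the norm, and part (2) by combining surjectivity of $N_{\FF_{q^s}/\FF_q}$ with the fact that, under $q\equiv 1 \bmod r$, the $r$-th powers have index exactly $r$ in both $\FF_{q^s}^\times$ and $\FF_q^\times$ (the paper phrases this as ``exactly $1/r$ of the elements are $r$-th powers, so non-$r$-th powers map to non-$r$-th powers,'' which is your quotient-group packaging). Your first variant, identifying $\ker N$ with the $(q-1)$-th powers, is a slightly different but equally valid route to the same counting fact, and you supply more justification (surjectivity of the norm, the exact subgroup indices) than the paper does.
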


\begin{proof}
    Consider the norm map $N_{\FF_{q^s}/\FF_q}: \FF_{q^s}\rightarrow \FF_q$. Part (\ref{part1}) follows immediately from the fact that this map is a homomorphism. Conversely, if $q\equiv 1 \mod r$, then exactly $1/r$ of the elements of $\FF_{q}^\times$ and $1/r$ of the elements of $\FF_{q^s}^\times$ are $r$-th powers. Since the map $N_{\FF_q^s/\FF_q}: \FF_{q^s}^\times\rightarrow \FF_q^\times$ is a surjective homomorphism of finite groups and $r$-th powers map to $r$-th powers, it follows that non-$r$-th powers map to non-$r$-th powers.
\end{proof}

We state below the theorem by G\'{o}mez-P\'{e}rez,  Nicol\'{a}s, Ostafe, and Sadornil that proves a necessary condition for dynamical irreducibility of polynomials. The proof uses Lemma \ref{Lemma:Capelli's} and norm computations, similar to the arguments in \cite{BJ}. They also use Stickelberger's Lemma~\cite{stickelberger} which describes a relationship between the parity of number of irreducible factors and the degree of the polynomial depending on the discriminant of the polynomial. The conditions they obtain are necessary for dynamical irreducibility, but not sufficient.

\begin{theorem}[Theorem 3.2, \cite{Ostafe_et_al.}] \label{thm:3.2ostafe}
Let $q =p^s$, $p$ an odd prime, and $f \in \FF_q[x]$ a dynamically irreducible polynomial with $\deg(f)\geq 2$, leading coefficient $a_d$, non-constant derivative $f'$ and $\deg(f')= k\leq d-1$. Then the following hold:
\begin{itemize}
    \item if $d$ is even, then $\Disc(f)$ and $a_d^k\Res(f^{(n)}, f')$, $n \geq 2$, are non-squares in $\FF_q$
    \item if $d$ is odd, then $\Disc(f)$ and $(-1)^{\frac{d-1}{2}}a_d^{(n-1)k+1}\Res(f^{(n)}, f')$, $n\geq 2$, are squares in $\FF_q$. 
\end{itemize}
\end{theorem}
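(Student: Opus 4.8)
The plan is to deduce both parts of each bullet from Stickelberger's Lemma, which asserts that a separable polynomial $g\in\FF_q[x]$ with $q$ odd, of degree $m$ and with $\ell$ irreducible factors over $\FF_q$, has $\Disc(g)$ a square in $\FF_q$ if and only if $m-\ell$ is even. Since $f$ is dynamically irreducible, every iterate $f^{(n)}$ is irreducible over the finite field $\FF_q$, hence separable, so $\ell=1$ and $\Disc(f^{(n)})$ is a square if and only if $\deg f^{(n)}-1=d^{n}-1$ is even, i.e.\ if and only if $d$ is odd. Taking $n=1$ gives the assertion about $\Disc(f)$ immediately; the general fact that $\Disc(f^{(n)})$ is a square precisely when $d$ is odd will also be used below.

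For the resultant statements ($n\ge 2$) the idea is to transfer the discriminant criterion one level down the iterate via Capelli's Lemma. Writing $f^{(n)}=f^{(n-1)}\circ f$ and letting $\beta$ be any root of $f^{(n-1)}$, irreducibility of $f^{(n)}$ forces $f(x)-\beta$ to be irreducible over $L:=\FF_q(\beta)$, where $[L:\FF_q]=d^{n-1}$. Being irreducible over the finite field $L$, the polynomial $f(x)-\beta$ is separable of degree $d$ with one factor, so Stickelberger over $L$ gives that $\Disc(f(x)-\beta)$ is a square in $L$ if and only if $d$ is odd. I would then rewrite this discriminant via the identities
\[
\Disc(f(x)-\beta)=(-1)^{d(d-1)/2}a_d^{d-k-2}\Res\bigl(f(x)-\beta,f'(x)\bigr)
\quad\text{and}\quad
\Res\bigl(f(x)-\beta,f'(x)\bigr)=(-1)^{dk}c^{d}\!\!\prod_{f'(\gamma)=0}\!\!\bigl(f(\gamma)-\beta\bigr),
\]
where $c$ is the leading coefficient of $f'$ and $\gamma$ runs over the roots of $f'$ with multiplicity.

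The remaining step is to descend from $L$ to $\FF_q$ by taking norms: by Lemma~\ref{lem:rthroot} with $r=2$ (applicable since $q$ is odd, so $q\equiv1\bmod 2$), $\Disc(f(x)-\beta)$ is a square in $L$ if and only if $N_{L/\FF_q}\bigl(\Disc(f(x)-\beta)\bigr)$ is a square in $\FF_q$. The constant factors contribute their $d^{n-1}$-th powers, and since the conjugates of $\beta$ are exactly the roots of $f^{(n-1)}$ one computes
\[
N_{L/\FF_q}\Bigl(\ \prod_{f'(\gamma)=0}\bigl(f(\gamma)-\beta\bigr)\Bigr)
=\prod_{f'(\gamma)=0}\ \prod_{f^{(n-1)}(\beta')=0}\bigl(f(\gamma)-\beta'\bigr)
=\prod_{f'(\gamma)=0}\frac{f^{(n)}(\gamma)}{A_{n-1}}
=\frac{\prod_{f'(\gamma)=0}f^{(n)}(\gamma)}{A_{n-1}^{k}},
\]
where $A_{n-1}=a_d^{(d^{n-1}-1)/(d-1)}$ is the leading coefficient of $f^{(n-1)}$, and finally $\prod_{f'(\gamma)=0}f^{(n)}(\gamma)=(-1)^{kd^{n}}\Res(f^{(n)},f')/c^{d^{n}}$. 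Assembling these, the leading coefficient $c$ of $f'$ cancels and one is left with $N_{L/\FF_q}\bigl(\Disc(f(x)-\beta)\bigr)=(-1)^{d^{n}(d-1)/2}a_d^{E}\Res(f^{(n)},f')$ with $E=d^{n-1}(d-k-2)-k(d^{n-1}-1)/(d-1)$. Reducing modulo squares in $\FF_q^\times$ (so $a_d^{E}$ may be replaced by $a_d^{E\bmod 2}$ and perfect squares dropped) and using that this quantity is a square precisely when $d$ is odd then gives exactly: for $d$ even, $a_d^{k}\Res(f^{(n)},f')$ is a non-square, and for $d$ odd, $(-1)^{(d-1)/2}a_d^{(n-1)k+1}\Res(f^{(n)},f')$ is a square.

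The step I expect to be the main obstacle is this final bookkeeping: one has to evaluate the sign $(-1)^{d^{n}(d-1)/2}$ and the exponent $E$ modulo $2$ for $n\ge 2$, using that $(d^{m}-1)/(d-1)=1+d+\cdots+d^{m-1}$ has parity $1$ when $d$ is even and parity $m$ when $d$ is odd, and then check that the resulting parities match the clean exponents $k$ and $(n-1)k+1$ in the statement. A secondary technical point worth recording is separability: over a finite field an irreducible polynomial is automatically separable, which is what lets Stickelberger apply to $f^{(n)}$ over $\FF_q$ and to $f(x)-\beta$ over $L$, and which guarantees $\gcd\bigl(f^{(n)},(f^{(n)})'\bigr)=1$ so that the resultant $\Res(f^{(n)},f')$ is nonzero.
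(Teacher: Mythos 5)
The paper contains no proof of this statement---it is imported verbatim from \cite{Ostafe_et_al.}, with only a one-sentence description of the method (Capelli's Lemma, Stickelberger's parity criterion, and norm computations). Your proposal reconstructs exactly that argument, so in approach you are aligned with the cited proof. The skeleton is sound: irreducibility over a finite field gives separability, so Stickelberger applies to $f^{(n)}$ over $\FF_q$ (yielding the $\Disc(f)$ claim with $n=1$) and, after Capelli, to $f(x)-\beta$ over $L=\FF_q(\beta)$; Lemma~\ref{lem:rthroot} with $r=2$ descends the square condition from $L$ to $\FF_q$; and your norm evaluation $N_{L/\FF_q}\bigl(\prod_{f'(\gamma)=0}(f(\gamma)-\beta)\bigr)=\prod_{f'(\gamma)=0}f^{(n)}(\gamma)/A_{n-1}^{k}$ is correct.

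The step you defer---the final parity bookkeeping---is the only place where something genuinely goes wrong, so you should carry it out rather than assert it matches. For $d$ even it does match: $(-1)^{d^{n}(d-1)/2}=1$ for $n\ge2$ and $E\equiv k\pmod 2$, giving the stated $a_d^{k}\Res(f^{(n)},f')$. For $d$ odd, however, one finds $E\equiv 1+kn\pmod 2$, whereas the quoted exponent $(n-1)k+1\equiv 1+kn+k$; the two differ by $a_d^{k}$ modulo squares. This is invisible when $k$ is even (the generic case $k=d-1$ for odd $d$, and the paper's cubic application with $p\ge5$, $k=2$), but not when $k$ is odd. A concrete check: take $f(x)=2x^3+x^2+2$ over $\FF_3$, so $d=3$, $f'=2x$, $k=1$, and $f^{(2)}$ is irreducible by Table~\ref{tab:cubeover3}. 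With the paper's resultant convention, $\Res(f^{(2)},f')=(-1)^{9}2^{9}f^{(2)}(0)=1$, and your exponent $kn+1=3$ gives $(-1)\cdot 2^{3}\cdot 1\equiv 1$, a square, while the quoted exponent $(n-1)k+1=2$ gives $(-1)\cdot2^{2}\cdot1\equiv 2$, a non-square. So your derivation produces the correct square class in this degenerate case and the statement as transcribed does not; the discrepancy presumably traces to a normalization convention or an implicit restriction (such as $k=d-1$ or $p\nmid d$) in the source. In short: right method, correct computations as far as they go, but the ``main obstacle'' you flag is real and resolves in favor of $a_d^{kn+1}$ rather than $a_d^{(n-1)k+1}$ when $d$ and $k$ are both odd.
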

Using formulas for the resultant, the expressions in Theorem \ref{thm:3.2ostafe} can be written explicitly in terms of the polynomial $f$ and points in the orbits of the critical points, as in \cite[Corollary 3.3]{Ostafe_et_al.}, we use such an expression in Section \ref{sec:Cubic polynomials}. 

\section{Unicritical Polynomials}\label{sec:unicrit}

A polynomial $f$ is \textit{unicritical} if it has a single affine critical point. 
Let $f$ be a unicritical polynomial of degree $d$. Let $a$ be the lead coefficient of $f$ and $\gamma$ the critical point of $f$. We first conjugate to move the critical point to $0$, let \[h(x)=f(x+\gamma)-\gamma.\] Then by Lemma \ref{Lemma: Conjugation}, $f$ is dynamically irreducible if and only if $(h,
-\gamma)$ is a dynamically irreducible pair. Since $h(x)$ is unicritical with critical point $0$, it has the form \[h(x)=ax^d+c. \]

The Vahlen-Capelli criterion provides irreducibility conditions for polynomials of the form $x^d+c.$  Naturally, this can be used to explore dynamical irreduciblity of unicritical polynomials. 
\begin{theorem}[Theorem 9.1, Chapter VI, \cite{lang02}]\label{Lemma: constant term rth power}
Let $K$ be a field and $d$ an integer $\geq 2$. Let $c\in K$, $c\neq 0$. Assume that for all prime numbers $r$ such that $r|d$ we have $c\notin K^r$, and if $4|d$ then $c\notin -4K^4$. Then $x^d-c$ is irreducible in $K[x]$.
\end{theorem}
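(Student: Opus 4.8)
The plan is to reduce the general statement to the case of prime-power degree and then run an induction on the exponent using Capelli's Lemma (Lemma~\ref{Lemma:Capelli's}), with norm computations playing the role they play in Lemma~\ref{lem:rthroot}.

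\textbf{Reduction to prime powers.} Factor $d = p_1^{a_1}\cdots p_k^{a_k}$. First I would establish the multiplicative statement: if $\gcd(m,n)=1$, then $x^{mn}-c$ is irreducible over $K$ if and only if both $x^m-c$ and $x^n-c$ are. To see this, pick $\alpha\in\bar K$ with $\alpha^{mn}=c$; then $\beta:=\alpha^n$ and $\gamma:=\alpha^m$ are roots of $x^m-c$ and $x^n-c$ respectively, and writing $1=um+vn$ gives $\alpha=\gamma^u\beta^v$, so $K(\alpha)=K(\beta)\cdot K(\gamma)$. Since $[K(\beta):K]\mid m$ and $[K(\gamma):K]\mid n$ are coprime, $[K(\alpha):K]=[K(\beta):K]\,[K(\gamma):K]$, which equals $mn$ exactly when both factors attain their maximal value, i.e.\ when both $x^m-c$ and $x^n-c$ are irreducible. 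Iterating over the $p_i^{a_i}$, which are pairwise coprime, reduces the problem to degree $d=p^a$; and the hypotheses distribute correctly, since ``$c\notin K^r$ for every prime $r\mid d$'' splits into the prime-power factors, and the clause ``$c\notin-4K^4$ if $4\mid d$'' attaches to whichever $p_i^{a_i}$ is a $2$-power $\ge 4$ (if any).

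\textbf{Prime-power case.} For the base case $d=p$: if $\operatorname{char}K=p$ then $x^p-c=(x-c^{1/p})^p$ in $\bar K[x]$, which lies in $K[x]$ only if $c^{1/p}\in K$, so irreducibility over $K$ is precisely the condition $c\notin K^p$. If $\operatorname{char}K\ne p$, a proper monic factor of $x^p-c$ has constant term of the form $\pm\zeta\beta^m$ with $\beta^p=c$, $\zeta^p=1$, $0<m<p$; setting $b=\zeta\beta^m\in K$ gives $b^p=c^m$, and combining with $um+vp=1$ yields $c=(b^uc^v)^p\in K^p$, a contradiction. For $d=p^a$ with $a\ge2$, write $x^{p^a}-c=g(h(x))$ with $g(y)=y^p-c$ and $h(x)=x^{p^{a-1}}$. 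By Capelli's Lemma this is irreducible iff $g$ is irreducible over $K$ (equivalently $c\notin K^p$, by the base case) and $x^{p^{a-1}}-\beta$ is irreducible over $K(\beta)$ for a root $\beta$ of $g$. Applying the induction hypothesis over $K(\beta)$ reduces matters to checking $\beta\notin K(\beta)^p$ (together with a $-4$-condition when $p=2$). For $p$ odd this follows from $c\notin K^p$, because $N_{K(\beta)/K}(\beta)=c$, so $\beta\in K(\beta)^p$ would force $c\in K^p$. The ``only if'' direction is the easy one: $c\in K^r$ makes $x^d-c$ visibly divisible by $x^{d/r}-c^{1/r}$, and $c=-4e^4$ with $4\mid d$ makes it factor through $X^4+4e^4=(X^2-2eX+2e^2)(X^2+2eX+2e^2)$.

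\textbf{Expected main obstacle.} The delicate part is the prime $p=2$ together with the exceptional condition $c\notin-4K^4$. When $\beta^2=c$ one has $N_{K(\beta)/K}(\beta)=-c$ rather than $c$, and — unlike in the finite-field setting of Lemma~\ref{lem:rthroot}(\ref{part2}) — the norm being a square is necessary but not sufficient for $\beta$ itself to be a square in $K(\beta)$, so the clean norm argument of the odd case breaks down. The genuine content is to show that the only obstruction to irreducibility of $x^{2^a}-c$ beyond $c\in K^2$ is $c\in-4K^4$ (via the identity above at the first level), and that no new obstruction appears at higher $2$-power levels; concretely, one keeps the pair of hypotheses ``$c\notin K^2$ and $c\notin-4K^4$'' stable as one passes up the tower of fields $K(\beta)$ in the Capelli induction, which requires a careful analysis of the quadratic subextension rather than a one-line norm computation.
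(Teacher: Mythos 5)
The paper offers no proof of this statement: it is quoted directly from Lang (Theorem 9.1 of Chapter VI), so there is no in-paper argument to compare yours against. Judged on its own, your architecture is the standard one (and essentially Lang's): reduce to coprime prime-power factors via the compositum-degree argument, then induct on $x^{p^a}-c$ using Capelli's Lemma. The pieces you actually write out are correct — the coprime reduction, the base case $d=p$ via the constant term of a hypothetical proper factor, and the odd-prime inductive step via $N_{K(\beta)/K}(\beta)=c$ (in characteristic $p$ one can argue even more directly, since every $p$-th power in $K(\beta)$ already lies in $K$).

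However, there is a genuine gap: you never prove the case $p=2$, $a\ge 2$, which is precisely the part of the theorem where the hypothesis $c\notin-4K^4$ carries any content. You correctly diagnose that the norm argument fails there and describe what must be shown (``keep the pair of hypotheses stable up the tower''), but a description of the remaining task is not a proof. The missing step is a short explicit computation that you should supply. Assume $\operatorname{char}K\ne 2$ (in characteristic $2$ the clause is vacuous and the tower is purely inseparable of the right degree whenever $c\notin K^2$). Let $\beta^2=c$ with $c\notin K^2$, so $\{1,\beta\}$ is a $K$-basis of $K(\beta)$, and suppose $\epsilon=u+v\beta$ satisfies $\epsilon^2=\pm\beta$. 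Comparing coefficients gives $u^2+cv^2=0$ and $2uv=\pm1$; hence $u,v\ne 0$, $c=-u^2/v^2$ and $v=\pm1/(2u)$, so $c=-4u^4\in-4K^4$, a contradiction. This one computation disposes of both inductive hypotheses at once: $\beta\in K(\beta)^2$ is the case $\epsilon^2=\beta$, while $\beta\in-4K(\beta)^4$ means $\beta=-(2\delta^2)^2$, i.e.\ the case $\epsilon^2=-\beta$ with $\epsilon=2\delta^2$. With this inserted the induction closes and your proposal becomes a complete proof.
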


In \cite{BJ} and \cite{BJErrata}, the authors give sufficient conditions for dynamical irreducibility of a quadratic polynomial, which are also necessary when working over a finite field. In this section we extend these results to all unicritical polynomials. 
\begin{proposition}[Proposition 2.3', \cite{BJErrata}] \label{prop: BJ2.3} Let $K$ have characteristic not equal to two.  A quadratic polynomial $f(x)=ax^2+bx+c \in K[x]$ is dynamically irreducible if $\lbrace -af(\frac{-b}{2a})\rbrace \cup  \lbrace af^n(\frac{-b}{2a}): n\geq 2\rbrace$ contains no squares. In the case $K$ is a finite field, $f$ is dynamically irreducible if and only if $\lbrace -af(\frac{-b}{2a})\rbrace \cup  \lbrace af^n(\frac{-b}{2a}): n\geq 2\rbrace$ contains no squares.
\end{proposition}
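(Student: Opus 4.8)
The plan is to adapt the Boston--Jones argument: conjugate $f$ to a polynomial with critical point $0$, run an induction on $n$ using Capelli's Lemma (Lemma \ref{Lemma:Capelli's}), and convert the quadratic-character condition that appears at each level into a condition on $f^n(\gamma)$ by a norm computation. Write $\gamma=-b/(2a)$ for the critical point of $f$ and set $h(x)=f(x+\gamma)-\gamma$; a direct expansion gives $h(x)=ax^2+c_0$ with $c_0=f(\gamma)-\gamma$, and by Lemma \ref{Lemma: Conjugation} with $\varphi(x)=x-\gamma$ (so $\alpha=-\gamma$), $f$ is dynamically irreducible over $K$ if and only if $(h,-\gamma)$ is, i.e.\ if and only if $h^n(x)-\beta_0$ is irreducible over $K$ for all $n\ge1$, where $\beta_0:=-\gamma$. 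The bridge back to $f$ is the identity $h^n(x)=f^n(x+\gamma)-\gamma$, which gives $h^n(0)-\beta_0=f^n(\gamma)$ and, since $c_0=h(0)$, also $h^{n-1}(c_0)-\beta_0=h^n(0)-\beta_0=f^n(\gamma)$.

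Next I would set up the induction. For $n=1$, $h(x)-\beta_0=ax^2+(c_0-\beta_0)=ax^2-f(\gamma)$ is irreducible over $K$ exactly when $-f(\gamma)/a$, equivalently $-af(\gamma)$, is not a square in $K$. For $n\ge2$, assuming $h^{n-1}(x)-\beta_0$ is irreducible, pick a root $\beta_{n-1}\in\overline K$, so $[K(\beta_{n-1}):K]=2^{n-1}$; writing $h^n(x)-\beta_0=g(h(x))$ with $g(y)=h^{n-1}(y)-\beta_0$, Capelli's Lemma reduces the irreducibility of $h^n(x)-\beta_0$ to that of $h(x)-\beta_{n-1}=a\bigl(x^2-(\beta_{n-1}-c_0)/a\bigr)$ over $K(\beta_{n-1})$, which holds iff $(\beta_{n-1}-c_0)/a$ is not a square there. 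The crux is the norm: $h^{n-1}(y)-\beta_0$ is irreducible and separable (using $\operatorname{char}K\neq2$), with leading coefficient $a^{2^{n-1}-1}$ and roots exactly the conjugates of $\beta_{n-1}$, so
\[
N_{K(\beta_{n-1})/K}(\beta_{n-1}-c_0)=\frac{h^{n-1}(c_0)-\beta_0}{a^{2^{n-1}-1}}=\frac{f^n(\gamma)}{a^{2^{n-1}-1}}
\]
(the sign $(-1)^{2^{n-1}}$ is $+1$ for $n\ge2$), and $N_{K(\beta_{n-1})/K}(a)=a^{2^{n-1}}$, so $N_{K(\beta_{n-1})/K}\bigl((\beta_{n-1}-c_0)/a\bigr)=f^n(\gamma)/a^{2^n-1}$. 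Because $2^n-1$ is odd, this norm is a square in $K$ if and only if $af^n(\gamma)$ is.

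With this in hand, both directions follow. For the "if" direction over any $K$ with $\operatorname{char}K\neq2$: if neither $-af(\gamma)$ nor any $af^n(\gamma)$ ($n\ge2$) is a square in $K$, the base case is irreducible, and inductively $(\beta_{n-1}-c_0)/a$ cannot be a square in $K(\beta_{n-1})$ --- else multiplicativity of the norm (Lemma \ref{lem:rthroot}(\ref{part1})) would force $af^n(\gamma)$ to be a square in $K$ --- so $h^n(x)-\beta_0$ stays irreducible for all $n$, giving dynamical irreducibility of $(h,-\gamma)$ and hence of $f$. For the "only if" direction over $\FF_q$: since $\operatorname{char}K\neq2$ we have $q$ odd, so $q\equiv1\pmod2$ and Lemma \ref{lem:rthroot}(\ref{part2}) applies with $r=2$. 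If the set $\{-af(\gamma)\}\cup\{af^n(\gamma):n\ge2\}$ contains a square, let $m$ be the least index witnessing it; if $m=1$ then $h(x)-\beta_0$ is reducible, and if $m\ge2$ then by the "if" argument $h^{m-1}(x)-\beta_0$ is irreducible, $K(\beta_{m-1})=\FF_{q^{2^{m-1}}}$, and since $af^m(\gamma)$ is a square in $\FF_q$ the norm $N_{K(\beta_{m-1})/K}\bigl((\beta_{m-1}-c_0)/a\bigr)$ is a square in $\FF_q$, whence $(\beta_{m-1}-c_0)/a$ is a square in $K(\beta_{m-1})$, making $h(x)-\beta_{m-1}$ and so $h^m(x)-\beta_0$ reducible. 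In either case $f$ is not dynamically irreducible, which is the contrapositive.

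The step needing care is the bookkeeping of the leading coefficient: one must track that $h^{n-1}$ has leading coefficient $a^{2^{n-1}-1}$, combine the two norm contributions to get the exponent $2^n-1$, and observe that its oddness is exactly what collapses "$(\beta_{n-1}-c_0)/a$ is a square in $K(\beta_{n-1})$" down to "$af^n(\gamma)$ is a square in $K$." (The degenerate cases $f^n(\gamma)=0$ are consistent: then $0$, a square, lies in the set, and indeed $0$ is a root of $h^n(x)-\beta_0$, so it is reducible.) I do not expect any other serious obstacle; the rest is a routine application of Capelli's Lemma together with the norm--power correspondence.
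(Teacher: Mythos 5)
Your argument is correct and is essentially the route the paper takes: it recovers this proposition as the $d=2$ case of its unicritical machinery (conjugate so the critical point is $0$, induct via Capelli's Lemma, and convert the square condition at each level to one over the base field by the norm computation with the odd exponent $2^n-1$), as in Lemma \ref{lem:BJ2.5ext}, Theorem \ref{prop:unicritical}, and the remark following Theorem \ref{thm:maintheorem}. The only blemish is the sign typo $ax^2+(c_0-\beta_0)=ax^2-f(\gamma)$ in the base case (it should be $ax^2+f(\gamma)$, consistent with your stated criterion that $-af(\gamma)$ not be a square), and the citation of Lemma \ref{lem:rthroot}(\ref{part1}) over a general field, where one really just needs multiplicativity of the norm for separable extensions.
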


In the following lemma we generalize \cite[Lemma 2.5']{BJErrata}, in which the authors give conditions for the irreducibility of $g\circ f^n$ when $g\circ f^{n-1}$ is irreducible. These conditions are both necessary and sufficient in the finite field case, which allows us to give explicit dynamical irreducibility conditions for general unicritical polynomials.

\begin{lemma}\label{lem:BJ2.5ext} Let $K$ be a field with characteristic relatively prime to $d$.
    Let $h(x) = ax^d+c \in K[x]$.
    Let $g \in K[x]$  be a polynomial of degree $k\geq 1$ with leading coefficient $\ell(g)$. Suppose $g\circ h^{n-1}$ is irreducible over $K[x]$ for some $n \geq 1$. Let $C=(-a)^k\ell(g)$ if $n=1$ and $C=a^k\ell(g)$ if $n>1$ and let $D=(4a)^k\ell(g)$ if $n=1$ and $D=a^k\ell(g)$ if $n>1$. Then  $g\circ h^n$ is irreducible over $K$ if 
    \begin{itemize}
        \item $\frac{g\circ h^{n-1}(c)}{C}$ is not an $r$-th power in $K$ for every prime $r\mid d$, 
        \item and if $4\mid d$, $\frac{g\circ h^{n-1}(c)}{D}$ is not $4$-th power in $K$. 
    \end{itemize}  
     Moreover, in the case where $K$ is finite, $|K|\equiv 1 \mod r$ for each prime $r\mid d$, and if $4\mid d$, $|K|\equiv 1 \mod 4$, we have  $g\circ h^n$ is irreducible over $K$ if and only if the above conditions are satisfied.
 \end{lemma}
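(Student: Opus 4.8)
The plan is to apply Capelli's Lemma (Lemma~\ref{Lemma:Capelli's}) to the factorization $g\circ h^{n}=(g\circ h^{n-1})\circ h$, with outer polynomial $G:=g\circ h^{n-1}$ (irreducible by hypothesis) and inner polynomial $h$. Fixing a root $\beta\in\bar K$ of $G$, Capelli says $g\circ h^{n}$ is irreducible over $K$ if and only if $h(x)-\beta$ is; since $h(x)-\beta=a\bigl(x^{d}-\tfrac{\beta-c}{a}\bigr)$, this amounts to asking whether $x^{d}-\tfrac{\beta-c}{a}$ is irreducible over $K(\beta)$. For the sufficiency direction I will feed this to the Vahlen--Capelli criterion (Theorem~\ref{Lemma: constant term rth power}): it is enough that $\tfrac{\beta-c}{a}$ be a nonzero non-$r$-th power in $K(\beta)$ for every prime $r\mid d$, and, when $4\mid d$, that it not lie in $-4K(\beta)^{4}$.

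The bridge from $K(\beta)$ back to $K$ is the norm $N:=N_{K(\beta)/K}$. Since $G$ is irreducible it is $\ell(G)$ times the minimal polynomial of $\beta$, so $[K(\beta):K]=\deg G=:m$, and computing $N$ via the constant coefficient of the minimal polynomial of $\beta-c$ (and $N(a)=a^{m}$) gives
\[
N\!\left(\frac{\beta-c}{a}\right)=\frac{(-1)^{m}\,G(c)}{\ell(G)\,a^{m}}.
\]
I will then substitute the elementary data $\deg h^{n-1}=d^{n-1}$ and $\ell(h^{n-1})=a^{(d^{n-1}-1)/(d-1)}$, giving $m=kd^{n-1}$, $\ell(G)=\ell(g)\,a^{k(d^{n-1}-1)/(d-1)}$, and $G(c)=g\circ h^{n-1}(c)$; the exponent of $a$ collapses to $k(d^{n}-1)/(d-1)$. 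Comparing with $C$, I expect that for $n=1$ one gets exactly $N\bigl(\tfrac{\beta-c}{a}\bigr)=\tfrac{g\circ h^{n-1}(c)}{C}$, while for $n>1$ the two differ by a factor $(-1)^{m}a^{-kde}$ with $e$ a positive integer. The decisive point is that this correction is a perfect $r$-th power in $K$ for every prime $r\mid d$: the exponent of $a$ is divisible by $d$, hence by $r$, and $(-1)^{m}$ is an $r$-th power because either $d$ (hence $m$) is even or $r$ is odd; likewise it is a fourth power when $4\mid d$. Thus $N\bigl(\tfrac{\beta-c}{a}\bigr)$ and $\tfrac{g\circ h^{n-1}(c)}{C}$ always lie in the same coset of $r$-th powers. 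A parallel computation that also tracks the constant $-4$ handles the fourth-power clause: if $\tfrac{\beta-c}{a}\in -4K(\beta)^{4}$ then, on taking norms, $\tfrac{g\circ h^{n-1}(c)}{D}$ becomes a fourth power in $K$; the definition of $D$ (carrying the extra factor $4^{k}$ precisely when $n=1$, and equal to $C$ otherwise because then $(-4)^{m}$ is already a fourth power) is engineered exactly to absorb $(-4)^{m}$.

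With these identities, sufficiency is immediate from multiplicativity of the norm: $\tfrac{\beta-c}{a}\in K(\beta)^{r}$ forces $N\bigl(\tfrac{\beta-c}{a}\bigr)\in K^{r}$, hence $\tfrac{g\circ h^{n-1}(c)}{C}\in K^{r}$, so the hypotheses of the lemma provide exactly the two inputs required by Theorem~\ref{Lemma: constant term rth power} (the prime-$r$ clause and, via $D$, the $4\mid d$ clause), with $\tfrac{\beta-c}{a}\neq 0$ guaranteed because otherwise $\tfrac{g\circ h^{n-1}(c)}{C}=0$, an $r$-th power, contradicting the hypothesis. For the converse over a finite field $K=\FF_{q}$ with $q\equiv 1\bmod r$ for each prime $r\mid d$ (and $q\equiv 1\bmod 4$ when $4\mid d$), I will reverse every implication: if one of the displayed quantities is an $r$-th (resp.\ fourth) power in $\FF_{q}$, then so is the norm, whence Lemma~\ref{lem:rthroot}(\ref{part2}) yields $\tfrac{\beta-c}{a}=\gamma^{r}$ (resp.\ $\gamma^{4}$) in $K(\beta)=\FF_{q^{m}}$; writing $d=rd'$ then exhibits $x^{d'}-\gamma$ as a proper factor of $x^{d}-\tfrac{\beta-c}{a}$, so $h(x)-\beta$, and hence $g\circ h^{n}$, is reducible by Capelli.

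The step I expect to be the main obstacle is the bookkeeping in the second paragraph: getting the degree and leading coefficient of the iterate $h^{n-1}$ exactly right, collapsing the $a$-exponents correctly, and—above all—verifying that the residual factor is a genuine perfect $r$-th (and fourth) power so that it is invisible modulo $r$-th powers, together with the sign- and constant-sensitive analysis of the $-4K^{4}$ clause that is what forces the asymmetric definitions of $C$ and $D$ in the cases $n=1$ and $n>1$.
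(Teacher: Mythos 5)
Your proposal is correct and follows essentially the same route as the paper's proof: Capelli's Lemma to reduce to $h(x)-\beta$ over $K(\beta)$, the Vahlen--Capelli criterion, and the norm map $N_{K(\beta)/K}$ to transfer the $r$-th power (and $-4K^4$) conditions down to $K$, with the same key observation that the leading-coefficient and sign corrections are perfect $r$-th (and fourth) powers so that the norm agrees with $\tfrac{g\circ h^{n-1}(c)}{C}$ (resp.\ $D$) modulo $r$-th powers. Your bookkeeping via the collapsed exponent $k(d^{n}-1)/(d-1)$ is an equivalent repackaging of the paper's computation with $\ell(g\circ h^{n-1})=\ell(g)a^{k(d^{n-1}-1)/(d-1)}$.
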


 \begin{proof}
 By hypothesis $g \circ h^{n-1}(x)$ is irreducible. Fix a root $\beta$ of $g\circ h^{n-1}$. Then by Capelli's Lemma, $g\circ h^n(x)$ is reducible over $K$ if and only if $h(x)-\beta$ is reducible over $K(\beta)$. By Theorem \ref{Lemma: constant term rth power}, $h(x)-\beta = ax^d+c-\beta$ is reducible over $K(\beta)$ if and only if $\frac{-(c-\beta)}{a}$ is an $r$-th power in $K(\beta)$ for some prime $r\mid d$ or if $4\mid d$ and  $\frac{(c-\beta)}{4a}$ is a $4$-th power in $K(\beta)$. 
 
 We first use the norm map to reduce to the conditions over $K(\beta)$ to conditions over $K$. If $\frac{-(c-\beta)}{a}$ is an $r$-th power in $K(\beta)$ for some prime $r\mid d$, then $N_{K(\beta)/K}\left(\frac{-(c-\beta)}{a}\right)$ is an $r$-th power in $K$ since the norm map is a multiplicative homomorphism. Similarly, if $4\mid d$ and  $\frac{(c-\beta)}{4a}$ is a $4$-th power in $K(\beta)$, then $N_{K(\beta)/K}\left(\frac{(c-\beta)}{4a}\right)$ is a $4$-th power in $K$. By Lemma \ref{lem:rthroot}, both of these implications become if and only if statements when the conditions $|K|\equiv 1 \mod r$ for each $r\mid d$, and if $4\mid d$, $|K|\equiv 1 \mod 4$ are satisfied.
 
 We now evaluate these norms using the fact that the Galois conjugates of $\beta$ are exactly the roots of $g\circ h^{n-1}$, 
\begin{align*}
    N_{K(\beta)/K}\left(\frac{-(c-\beta)}{a}\right) &= \left(\frac{-1}{a}\right)^{\deg(g\circ h^{n-1})}N_{K(\beta)/K}(c-\beta)\\
        &= \left(\frac{-1}{a}\right)^{kd^{n-1}}\frac{g\circ h^{n-1}(c)}{\ell(g\circ h^{n-1})}
, \end{align*}
where $\ell(g\circ h^{n-1})$ is the lead coefficient of $g\circ h^{n-1}$. Similarly,
\begin{align*}
    N_{K(\beta)/K}\left(\frac{(c-\beta)}{4a}\right) &= \left(\frac{1}{4a}\right)^{\deg(g\circ h^{n-1})}N_{K(\beta)/K}(c-\beta)\\
        &= \left(\frac{1}{4a}\right)^{kd^{n-1}}\frac{g\circ h^{n-1}(c)}{\ell(g\circ h^{n-1})}
, \end{align*}

When $n=1$, $\ell(g\circ h^{n-1})=\ell(g)$. When $n>1$, $\ell(g\circ h^{n-1}) = \ell(g)a^{k\left(\frac{d^{n-1}-1}{d-1}\right)}$, and we have $\frac{d^{n-1}-1}{d-1} \equiv 1 \mod r$ for each prime $r\mid d$ and  if $4\mid d$, $\frac{d^{n-1}-1}{d-1} \equiv 1 \mod 4$.
Thus, for each prime $r$ dividing $d$, 
$\left(\frac{-1}{a}\right)^{kd^{n-1}}\frac{g\circ h^{n-1}(c)}{\ell(g\circ h^{n-1})}$ is an $r$-th power in $K$ if and only if $\frac{g\circ h^{n-1}(c)}{(-1)^{kd^{n-1}}a^k\ell(g)}$ is an $r$-th power in $K$. Similarly, if $4\mid d$,  $\left(\frac{1}{4a}\right)^{kd^{n-1}}\frac{g\circ h^{n-1}(c)}{\ell(g\circ h^{n-1})}$ is a $4$-th power in $K$ if and only if $\frac{g\circ h^{n-1}(c)}{4^{kd^{n-1}}a^k\ell(g)}$ is a $4$-th power in $K$. 
\end{proof}

\begin{theorem}\label{prop:unicritical}
    Consider the polynomial $h(x)=ax^d+c\in \mathbb{F}_q[x]$. Suppose $q\equiv 1 \mod r$ for each $r\mid d$ and if $4\mid d$, suppose $q\equiv 1 \mod 4$. Then the pair $(h,\beta)$ is dynamically irreducible over $\mathbb{F}_q$ if and only if for every prime $r\mid d$
    \begin{itemize}
        \item $\frac{-(c-\beta)}{a}$ is not an $r$-th power in $\FF_q$, and 
        \item $\frac{h^{n-1}(c)-\beta}{a}$ is not an $r$-th power in $\FF_q$ for all $n>1$,
    \end{itemize}
        and if $4\mid d$,
    \begin{itemize}
        \item $\frac{(c-\beta)}{4a}$ is not an $4$-th power in $\FF_q$, and
        \item $\frac{h^{n-1}(c)-\beta}{a}$ is not an $4$-th power in $\FF_q$ for all $n>1$.
    \end{itemize} 
\end{theorem}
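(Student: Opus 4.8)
The plan is to prove Theorem~\ref{prop:unicritical} by induction on the iterate, repeatedly applying Lemma~\ref{lem:BJ2.5ext} with the ``test polynomial'' $g$ chosen to be exactly $h^{n-1}(x) - \beta$ at each stage. More precisely, I would set $g_0(x) = x - \beta$ (a degree $1$ polynomial with leading coefficient $\ell(g_0) = 1$), so that $g_0 \circ h^n(x) = h^n(x) - \beta$, and observe that $(h,\beta)$ is dynamically irreducible if and only if $g_0 \circ h^n$ is irreducible over $\FF_q$ for every $n \geq 0$ (the $n=0$ case being the trivial irreducibility of the linear polynomial $x - \beta$). The induction hypothesis is that $g_0 \circ h^{n-1}$ is irreducible; the inductive step is a direct application of the ``if and only if'' half of Lemma~\ref{lem:BJ2.5ext}, which applies since the hypotheses on $q$ match exactly ($q \equiv 1 \bmod r$ for each prime $r \mid d$, and $q \equiv 1 \bmod 4$ if $4 \mid d$).

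The remaining work is just bookkeeping to unwind the constants $C$ and $D$ from Lemma~\ref{lem:BJ2.5ext} in the case $k = \deg g_0 = 1$, $\ell(g_0) = 1$. For the step from $g_0 \circ h^0 = x-\beta$ to $g_0 \circ h^1 = h(x)-\beta$ (i.e.\ $n=1$ in the lemma), we have $C = (-a)^1 \cdot 1 = -a$ and $D = (4a)^1 \cdot 1 = 4a$, and $g_0 \circ h^0(c) = c - \beta$; so the lemma's conditions read ``$\frac{c-\beta}{-a} = \frac{-(c-\beta)}{a}$ is not an $r$-th power'' and ``$\frac{c-\beta}{4a}$ is not a $4$-th power,'' matching the first bullet in each list. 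For the step from $g_0 \circ h^{n-1}$ to $g_0 \circ h^n$ with $n > 1$, we have $C = D = a^1 \cdot 1 = a$ and $g_0 \circ h^{n-1}(c) = h^{n-1}(c) - \beta$, so the conditions read ``$\frac{h^{n-1}(c)-\beta}{a}$ is not an $r$-th power'' and ``$\frac{h^{n-1}(c)-\beta}{a}$ is not a $4$-th power,'' matching the second bullet in each list. Chaining these equivalences over all $n$ gives the theorem: $(h,\beta)$ is dynamically irreducible iff every one of the listed non-power conditions holds.

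I do not anticipate a genuine obstacle here, since Lemma~\ref{lem:BJ2.5ext} has already done the substantive work (Capelli's Lemma, the Vahlen--Capelli criterion of Theorem~\ref{Lemma: constant term rth power}, and the norm descent via Lemma~\ref{lem:rthroot}). The only point requiring a little care is making the induction watertight: one must note that $h^{n-1}(c)$ is a well-defined element of $\FF_q$ (no field extension is involved on the $\FF_q$ side, since the norm has already been taken), and that the degree-$1$ polynomial $x-\beta$ legitimately serves as the base case $g$ with $n=1$ in the lemma, so that the first application produces exactly the ``$n=1$'' constants $C = -a$, $D = 4a$ rather than the ``$n>1$'' constants. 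I would also remark that the dynamical irreducibility of the pair $(h,\beta)$ is literally the statement that $h^n(x) - \beta = g_0 \circ h^n(x)$ is irreducible for all $n \geq 1$, so translating between ``pair'' language and ``$g \circ h^n$'' language is immediate and needs no separate argument.
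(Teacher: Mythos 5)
Your proposal is correct and is exactly the paper's proof: the authors also prove Theorem~\ref{prop:unicritical} by applying Lemma~\ref{lem:BJ2.5ext} with $g(x)=x-\beta$ (the paper just states this in one line, while you spell out the induction and the unwinding of the constants $C$ and $D$). No further comment is needed.
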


\begin{proof}
This follows by applying Lemma~\ref{lem:BJ2.5ext} with $g(x) = x-\beta$. 
\end{proof}

\begin{remark} If for any $r\mid d$ we have $q\not\equiv 1 \mod r$, then the map $x\mapsto x^r$ on $\FF_q$ is a bijection. Hence $x^r-\frac{(-(c-\beta))}{a}$ has a root in $\FF_q$ and by Theorem \ref{Lemma: constant term rth power}, $h(x)-\beta$ is reducible.  Similarly, if $4\mid d$ and $q\not\equiv 1\mod 4$, then the map $x\mapsto x^4$ is a bijection and $h(x)-\beta$ is reducible. Therefore, $(h,\beta)$ is not dynamically irreducible if the hypotheses $q\equiv 1 \mod r$ for each $r\mid d$ and if $4\mid d$, $q\equiv 1 \mod 4$ are not met.
\end{remark}

We now prove Theorem \ref{thm:maintheorem} as a special case of Theorem \ref{prop:unicritical}.

\begin{proof}[Proof of Theorem~\ref{thm:maintheorem}]
    Let $f(x)\in \FF_q$ be a unicritical point of prime degree $r$ with critical point $\gamma$ and lead coefficient $a$. Suppose $q\equiv 1 \mod r$. Let $h(x)=f(x+\gamma)-\gamma$. Then by Lemma \ref{Lemma: Conjugation}, $f$ is dynamically irreducible if and only if $(h,-\gamma)$ is dynamically irreducible. By Theorem \ref{prop:unicritical}, this holds if and only if $\frac{-(c+\gamma)}{a}$ is not an $r$-th power in $\FF_q$, and $\frac{h^{n-1}(c)+\gamma}{a}$ is not an $r$-th power in $\FF_q$ for all $n>1$. Note, $\frac{-(c+\gamma)}{a}=\frac{-(h(0)+\gamma)}{a}=\frac{-f(\gamma)}{a}$ and  $\frac{h^{n-1}(c)+\gamma}{a}=\frac{h^{n}(0)+\gamma}{a}=\frac{f^n(\gamma)}{a}$.
\end{proof}

Similar to \cite{BJ}, we can define the \emph{adjusted critical orbit} of a critical point $\gamma$ of a  unicritical polynomial $f$ as the set from Theorem \ref{thm:maintheorem},
\[
\left\lbrace -\frac{f(\gamma)}{a}\right\rbrace \bigcup \left\lbrace \frac{f^n(\gamma)}{a}: n\in \mathbb{Z}_{>1}\right\rbrace.
\] 
We observe that our definition differs slightly from \cite{BJ}, as their definition has the lead coefficient $a$ as a multiplicative factor, however the definitions are equivalent in the quadratic case. With this definition, Theorem \ref{thm:maintheorem} says that a unicritical polynomial $f(x)\in \FF_q$ of prime degree $r$ with $q\equiv 1\mod r$ is dynamically irreducible if and only if its adjusted critical orbit contains no $r$-th powers.

The techniques and results above are directly inspired by \cite{BJ}. In fact, we can recover \cite[Lemma 2.5, Proposition 2.3] {BJ} (Proposition \ref{prop: BJ2.3} above).
\begin{remark}
    Let $f(x) = ax^2+bx+c \in \mathbb{F}_q[x]$ where $q$ is an odd prime. Then the critical point of $f$ is $\frac{-b}{2a}$. Hence $f$ is dynamically irreducible if and only if $-a^{-1}f(\frac{-b}{2a})$ is not a square and $a^{-1}f^{n}\left(\frac{-b}{2a}\right)$ is not a square for $n>1$. By multiplying by $a^2$, we recover the conditions in Proposition \ref{prop:  BJ2.3}.
\end{remark}
In the following example, we apply Theorem \ref{thm:maintheorem} to find dynamically irreducible unicritical cubic polynomials over $\FF_7$. In this example, and in examples in forthcoming sections, we will display the critical portrait, which a directed graph on the sequence of iterates $\left( f^n(\gamma)\right)_{n\in\ZZ_\geq0}$ for each critical point $\gamma$ of $f$. 
\begin{example} \label{ex:unicrit}
Consider $f(x) = x^3+3\in \FF_7$. Given the critical portrait in Figure \ref{fig:exampleunicrit} we have that the adjusted critical orbit of $f$ is $\lbrace 2,4\rbrace$, which contains no cubes and thus by Theorem~\ref{thm:maintheorem} we have that $f(x) = x^3+3$ is dynamically irreducible.
\begin{center}
\begin{figure}[h]
    \centering
\begin{tikzpicture}
    \node (c) at (0,0) {$0$};
    \node (fc) at (1,0) {$3$};
    \node (f2c) at (2,0) {$2$};
    \node (f3c) at (3,0) {$4$};
    \draw[->] (c) --node[above] {$f$} (fc);
    \draw[->] (fc) --node[above] {$f$} (f2c);
    \draw[->] (f2c) --node[above] {$f$} (f3c);
    \draw [->] (f3c) to [out=-30,  in=30, looseness = 6]node[right] {$f$} (f3c);
\end{tikzpicture}
    \caption{}
    \label{fig:exampleunicrit}
\end{figure}
\end{center}

In fact, there are exactly 12 polynomials of the form $f(x) = ax^3+c \in \FF_7[x]$ with $a,c \neq 0$ that are dynamically irreducible. They are listed in Table \ref{tab:F7example} below along with their adjusted critical orbits.
    \def\arraystretch{1.9}
    \begin{table}[h!]
    \centering
    \caption{}
    \label{tab:F7example}

    \begin{tabular}{c|c}
     Adjusted critical orbit & Polynomials $ax^3+c \in \FF_7$  \\\hline
     $\lbrace 2,4 \rbrace$ & \makecell{$x^3+3, 2x^3+6, 4x^3+5$\\
     $3x^3+2, 5x^3+1, 6x^3+4$}\\\hline
     $\lbrace 3,5 \rbrace$ & \makecell{$x^3+4, 2x^3+1, 4x^3+2$\\$3x^3+5, 5x^3+6, 6x^3+3$}\\
\end{tabular}
\end{table}
\end{example}

\section{Cubic Polynomials}\label{sec:Cubic polynomials}

In this section, we examine the dynamical irreducibility of cubic polynomials over finite fields. Let $q=p^s$ for some odd prime $p$ and positive integer $s$ and consider 
\begin{align}
    f(x)=a_3x^3+a_2x^2+a_1x+a_0 \in \FF_q,\label{eqn:gen_peng}
\end{align}
where $a_3\neq 0$.

We first apply the result of G\'{o}mez-P\'{e}rez,  Nicol\'{a}s, Ostafe, and Sadornil given in Theorem \ref{thm:3.2ostafe} to $f$, and in particular, the formulas given in \cite[Corollary 3.3]{Ostafe_et_al.}. Let $k$ be the degree of $f'$ and $\gamma_1, \gamma_2$ the critical points of $f$. If $f(x)$ is dynamically irreducible over $\FF_q$, then  \[(-1)^{k+1}(k+1)a_{k+1}a_3^{(n-1)k+1}f^n(\gamma_1)f^n(\gamma_2)\] is a square in $\FF_q$ for all $n$. 
If $\FF_q$ has characteristic $p\geq 5$, for each $n$, this quantity is a square if and only if the quantity 
\begin{equation}\label{cubicdiscnorm} -3f^n(\gamma_1)f^n(\gamma_2)\end{equation} 
is a square, since in this case $k=2$.

Since this condition is necessary, but not sufficient, we investigate further using the following result of Dickson. 

\begin{theorem}[Theorem 3, \cite{Dickson}] \label{thm:Dickson} Let $p>3$ be an odd prime and $q=p^s$. Suppose $a_0a_1\neq 0$. The polynomial \[x^3+a_1x+a_0\] is irreducible over $\FF_q$ if and only if the following conditions hold
\begin{enumerate}
    \item $-4a_1^3-27a_0^2$ is a non-zero square in $\FF_q$, and
    \item for $\mu\in \FF_q$ satisfying $-4a_1^3-27a_0^2=81\mu^2$, $\frac{1}{2}(-a_0+\mu\sqrt{-3})$ is not a cube in $\FF_q(\sqrt{-3})$.
\end{enumerate}
\end{theorem}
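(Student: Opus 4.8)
The plan is to combine the discriminant--Galois correspondence with the classical Cardano resolvent, reinterpreted over $\FF_q$. Throughout, $p>3$ makes $2$ and $3$ invertible, and I set $L=\FF_q(\sqrt{-3})=\FF_q(\omega)$, where $\omega$ is a primitive cube root of unity; recall $\omega=\frac{-1+\sqrt{-3}}{2}$, so these two fields coincide. The central observation, to be verified by a direct computation, is that the quantity $\theta:=\frac12(-a_0+\mu\sqrt{-3})$ from condition (2) is a root of the resolvent quadratic $t^2+a_0t-a_1^3/27$: using $-4a_1^3-27a_0^2=81\mu^2$ one gets $a_0^2+\tfrac{4}{27}a_1^3=-3\mu^2$, so the two resolvent roots are $\theta,\bar\theta=\frac12(-a_0\pm\mu\sqrt{-3})$, where the bar denotes the nontrivial automorphism of $L/\FF_q$ (trivial if $\sqrt{-3}\in\FF_q$). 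Cardano then writes the three roots of the cubic as $u+v,\ \omega u+\omega^2 v,\ \omega^2 u+\omega v$, with $u^3=\theta$, $v^3=\bar\theta$ and $uv=-a_1/3\in\FF_q$; note $\theta\bar\theta=(uv)^3=-a_1^3/27\neq 0$, so $\theta\neq 0$.

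First I would pin down the factorization type using condition (1). Since $p\neq 2$, a separable cubic over $\FF_q$ has square discriminant if and only if its Galois group, viewed in $S_3$ via its action on the roots, lies in $A_3$; moreover $\Disc=-4a_1^3-27a_0^2$ being a \emph{nonzero} square first guarantees separability. Over $\FF_q$ the Galois group is cyclic, generated by Frobenius, so a factorization into a linear factor times an irreducible quadratic would make Frobenius a transposition, an odd permutation, forcing $\Disc$ to be a nonsquare. Hence condition (1) excludes this case, and any cubic satisfying (1) is either irreducible or splits completely over $\FF_q$. In particular (1) is necessary for irreducibility, since an irreducible cubic has Frobenius acting as a $3$-cycle, which is even; this is one half of the forward direction.

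The crux is the equivalence, valid under (1): the cubic splits completely over $\FF_q$ if and only if $\theta$ is a cube in $L$. For the forward implication, if all three roots lie in $\FF_q\subseteq L$, then $u+v$ and $\omega u+\omega^2 v$ lie in $L$, and since $\omega\in L$ with $\omega\neq\omega^2$ this linear system solves for $u\in L$, whence $\theta=u^3$ is a cube in $L$. Conversely, suppose $\theta=w^3$ with $w\in L$; put $u=w$ and $v=-a_1/(3w)$ (legitimate as $\theta\neq 0$), and check $v^3=-a_1^3/(27\theta)=\theta\bar\theta/\theta=\bar\theta$, so that $u+v\in L$ is a genuine root of the cubic. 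Because $L$ has degree $1$ or $2$ over $\FF_q$ while the roots of an irreducible cubic generate $\FF_{q^3}$, and $\FF_{q^3}\cap\FF_{q^2}=\FF_q$, a root lying in $L$ forces a root in $\FF_q$; thus the cubic is not irreducible, and by the dichotomy above it splits completely. Combining, under (1) the cubic is irreducible if and only if it does not split completely, i.e. if and only if $\theta$ is not a cube in $L$, which is exactly condition (2); together with the necessity of (1) this yields the theorem.

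The main obstacle I anticipate is the bookkeeping around the constraint $uv=-a_1/3$: a cube root $w$ of $\theta$ in $L$ must be shown to pair with a \emph{specific} cube root of $\bar\theta$ to produce an actual root, rather than an arbitrary combination of cube roots, and this is where the identity $\theta\bar\theta=-a_1^3/27$ is essential. A secondary point to handle cleanly is independence from the sign of $\mu$: replacing $\mu$ by $-\mu$ swaps $\theta\leftrightarrow\bar\theta$, and the automorphism of $L/\FF_q$ carries cubes to cubes, so ``$\theta$ is a cube in $L$'' and ``$\bar\theta$ is a cube in $L$'' are equivalent. Finally I would confirm that the degenerate exclusions ($\theta\neq 0$, separability) all follow from the standing hypotheses $a_0a_1\neq 0$ and condition (1), so that no additional cases arise.
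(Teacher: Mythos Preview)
The paper does not prove this theorem; it is quoted from Dickson \cite{Dickson} and used as a black box. So there is no ``paper's own proof'' to compare against, and your proposal stands on its own as a self-contained argument.

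Your argument is correct and is essentially the classical Cardano--resolvent proof, adapted to finite fields. The resolvent identification $\theta\bar\theta=-a_1^3/27$, the discriminant/$A_3$ dichotomy (irreducible versus split under condition (1)), and the degree argument showing a root in $L\subseteq\FF_{q^2}$ forces a root in $\FF_q$ are all sound. The pairing $v=-a_1/(3w)$ is exactly the right way to handle the $uv=-a_1/3$ constraint, and your check that $v^3=\bar\theta$ via the product $\theta\bar\theta$ is clean.

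One small gap in the sign-of-$\mu$ discussion: when $\sqrt{-3}\in\FF_q$, the extension $L/\FF_q$ is trivial, so the nontrivial automorphism you invoke to swap $\theta\leftrightarrow\bar\theta$ does not exist; $\theta$ and $\bar\theta$ are then two distinct elements of $\FF_q$ interchanged only by $\mu\mapsto-\mu$. The fix is immediate and already implicit in your setup: since $\theta\bar\theta=(-a_1/3)^3$ is a cube in $\FF_q$, one of $\theta,\bar\theta$ is a cube in $L$ if and only if the other is. You may want to state this uniformly rather than appealing to the Galois automorphism.
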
 

\begin{example}\label{ex:(6,2)1st_iterate}
    Consider the polynomial $h(x)=x^3+6x+2$ over $\FF_7$. Letting $a_0=2$ and $a_1=6$ in the first condition, we obtain
    \[-4\cdot6^3-27\cdot2^2\equiv 1 \pmod{7},\]
    which is a non-zero square modulo $7$. We can then determine $\mu$ such that $1 \equiv 81\mu^2 \pmod{7}$ by noting that $81^{-1} \equiv 2 \pmod{7}$, so such a $\mu$ satisfies $\mu^2 \equiv 2 \pmod{7}$, and thus $\mu \in \{3, 4\}$.

For each $\mu \in \lbrace 3,4 \rbrace$ we can compute that $\frac{1}{2}(-2+\mu\sqrt{-3})\equiv 2$ or $3\pmod{7}$. Since neither $2$ nor $3$ are cubes in $\FF_7(\sqrt{-3})$, we can conclude that this cubic is irreducible over $\FF_7$ by Theorem \ref{thm:Dickson}.        
\end{example}

\begin{remark} When $a_1=0$, the polynomial is unicritical and we may apply Theorem \ref{Lemma: constant term rth power}. Theorem \ref{thm:Dickson} also applies when $a_1=0$, after a minor adjustment. In this case the quantity in the second condition simplifies to $\frac{1}{2}\left(-a_0+\sqrt{a_0^2}\right)$. We must choose $\sqrt{a_0^2}=-a_0$ so that this quantity is non-zero.  Then the second condition reduces to the requirement that $-a_0$ is not a cube, which matches the criterion for uni-critical polynomials from Theorem \ref{Lemma: constant term rth power}.
\end{remark}

With $f$ as in Equation \eqref{eqn:gen_peng}, we make a standard change of variables; 
\begin{equation}\label{eq:cubiconjugate}
h(x)=f\left(x-\frac{a_2}{3a_3}\right)+\frac{a_2}{3a_3} = b_3x^3+b_1x+b_0,\end{equation}
where $b_3=a_3$, $b_1=a_1-\frac{a_2^2}{3a_3}$, and $b_0 = a_0 + \frac{2a_2^3-9a_3a_2(a_1-1)}{27a_3^2}$. Note, if the lead coefficient of $f$ is a square, we can make a further change of variables to get a monic polynomial conjugate to $f$, as in Example \ref{ex:cubicleadcoeff}. Since this is not always possible, we avoid doing so here.

By Lemma \ref{Lemma: Conjugation}, $f^n(x)$ is irreducible if and only if $h^n(x)-\frac{a_2}{3a_3}$ is irreducible. Proposition \ref{prop:cubic} gives criterion for dynamical irreducibility of a pair $(h,\beta_0)$, with $h$ of the form $h(x) = b_3x^3+b_1x+b_0$, using Theorem \ref{thm:Dickson}. This can be used to test dynamical irreducibility of $f$ by setting $\beta_0 = \frac{a_2}{3a_3}$. 
We now prove Proposition \ref{prop:cubic}. 

\begin{proof}[Proof of Proposition~\ref{prop:cubic}]
 Suppose $h^{n}(x)-\beta_0$ is irreducible over $\FF_q$ for some $n\geq 0$. Then by Cappelli's Lemma (Lemma \ref{Lemma:Capelli's}), $h^{n+1}(x)-\beta_0$ is irreducible over $\FF_q$ if and only if $h(x)-\beta_{n}$ is irreducible over $\FF_q(\beta_{n})$. Applying Theorem \ref{thm:Dickson} to $\frac{1}{b_3}(h(x)-\beta_{n})$, we see $h(x)-\beta_{n}$ is irreducible over $\FF_q(\beta_{n})$ if and only if 
    \[-4\left(\frac{b_1}{b_3}\right)^3-27\left(\frac{b_0-\beta_n}{b_3}\right)^2\] is a non-zero square in $\FF_q(\beta_n)$, and for $\mu_n\in \FF_q(\beta_n)^\times$ such that $-4\left(\frac{b_1}{b_3}\right)^3-27\left(\frac{b_0-\beta_n}{b_3}\right)^2=81\mu^2$, we have \[\frac{1}{2}\left(-\left(\frac{b_0-\beta_n}{b_3}\right)+\mu_n\sqrt{-3}\right)\] is not a cube in $\FF_q(\sqrt{-3},\beta_n)$.
 Using the formulas for the discriminant and resultant given in Section \ref{sec:background}, we can see, $-4\left(\frac{b_1}{b_3}\right)^3-27\left(\frac{b_0-\beta_n}{b_3}\right)^2 = -\Res\left(\frac{1}{b_3}(h(x)-\beta_n), \frac{1}{b_3}h'(x)\right)= \Disc\left(\frac{1}{b_3}(h(x)-\beta_n)\right)$. 

 By Lemma \ref{lem:rthroot}, $\Disc\left(\frac{1}{b_3}(h(x)-\beta_n)\right)$ is a square in $\FF_q(\beta_n)$ if and only if $$N_{\FF_{q^{3^n}}/\FF_q}\left(\Disc\left(\frac{1}{b_3}(h(x)-\beta_n)\right)\right)$$ is a square in $\FF_q$. Letting $\gamma_1, \gamma_2$ denote the critical points of $h$ and $B$ the lead coefficient of $h^n$. Then 
\begin{eqnarray*}
    N_{\FF_{q^{3^n}}/\FF_q}\left(\Disc\left(\frac{1}{b_3}(h(x)-\beta_n)\right)\right)&= &N_{\FF_{q^{3^n}}/\FF_q}\left((-1)\Res\left(\frac{1}{b_3}(h(x)-\beta_n),\frac{1}{b_3}h'(x)\right)\right) \nonumber \\
    &= &N_{\FF_{q^{3^n}}/\FF_q}\left(-3^3\cdot \frac{h(\gamma_1)-\beta_n}{b_3}\cdot\frac{h(\gamma_2)-\beta_n}{b_3}\right) \nonumber\\
    &= & N_{\FF_{q^{3^n}}/\FF_q}\left(\frac{-27}{b_3^2}\right)N_{\FF_{q^{3^n}}/\FF_q}\left(h(\gamma_1)-\beta_n\right)N_{\FF_{q^{3^n}}/\FF_q}\left(h(\gamma_2)-\beta_n\right)\nonumber \\
    &= &\left(\frac{-27}{b_3^2}\right)^{3^{n-1}}\frac{1}{B^2}(h^{n+1}(\gamma_1)-\beta_0)(h^{n+1}(\gamma_2)-\beta_0). 
\end{eqnarray*}
This quantity is a square if and only if $-3(h^{n+1}(\gamma_1)-\beta_0)(h^{n+1}(\gamma_2)-\beta_0)$ is a square. 
\end{proof}

Returning to the general setting, we have $h(x)=f(x-\beta_0)+\beta_0$ with $\beta_0=\frac{a_2}{3a_3}$. Then \[-3(h^n(c_1)-\beta_0)(h^n(c_2)-\beta_0)=-3f^n(\gamma_1-\beta_0)f^n(\gamma_2-\beta_0).\] Since the critical points of $f(x)$ are $\gamma_1-\beta_0$ and $\gamma_2-\beta_0$, Condition \ref{cond1} reduces to the condition given in Equation \eqref{cubicdiscnorm}, as expected. 

\begin{remark}
        With set up as is Proposition \ref{prop:cubic}, assume $-3(h^n(\gamma_1)-\beta_0)(h^n(\gamma_2)-\beta_0)$ is a square for all $n\geq 1$, so Condition \ref{cond1} of Proposition \ref{prop:cubic} holds. Then the pair $(h,\beta_0)$ is dynamically irreducible if and only if Condition \ref{cond2} of Proposition \ref{prop:cubic} holds. By Lemma \ref{lem:rthroot}, $\frac{1}{2}\left(-\left(\frac{b_0-\beta_n}{b_3}\right)+\mu_n\sqrt{-3}\right)$ is not a cube in $\FF_q(\sqrt{-3},\beta_n)$ if and only if its norm is not a cube in $\FF_q$, however, it is not clear to the authors how to write a general formula for the norm of this expression. The condition can be checked up to $n=10$ fairly quickly using Sage for $p \equiv 1 \mod 3$. 
        (See Algorithm~\ref{alg:peng_algorithm} in Appendix~\ref{appendix}.) 
\end{remark}
In the following examples we demonstrate how Proposition \ref{prop:cubic} applies to specific cubic polynomials.
We note that via the proof of Propoisiton \ref{prop:cubic}, if $h^n(x)$ is irreducible, then $h^{n+1}$ is irreducible if and only if conditions \ref{cond1} and \ref{cond2} are satisfied. 

\begin{example}\label{ex:cubiccubes}
In this example, we apply Proposition~\ref{prop:cubic} to test the irreducibility of iterates of the cubic polynomial  $h(x)=x^3+6x+2$ over $\FF_7$. First, recall that in Example~\ref{ex:(6,2)1st_iterate}, we showed that $h(x)$ is irreducible using Theorem~\ref{thm:Dickson}. We will show that $h^{n+1}(x)$ satisfies the conditions of Proposition \ref{prop:cubic} for $n\in \lbrace 1,2,3,4 \rbrace$ however, $h(x)$ will be reducible at the fifth iterate, and in particular it fails the cube condition. 

We compute the set of critical points, $\{\gamma_1,\gamma_2\}$, of $h(x)$ as $\{\sqrt{5},-\sqrt{5}\}$ the critical orbits of $h(x)$ is given below in Figure \ref{fig:examplecubic1portrait}. 
\begin{center}
\begin{figure}[h]
\begin{tikzpicture}
    \node(c) at (-.3,0) {$\pm \sqrt{5}$};
    \node (fc) at (2,0) {$\pm4\sqrt{5}+2$};
    \node (f2c) at (4,0) {$5$};
    \node (f3c) at (5.5,0) {$3$};
    \draw[->] (c) --node[above] {$h$} (fc);
    \draw[->] (fc) --node[above] {$h$} (f2c);
    \draw[->] (f2c) to [out=-60,  in=-120]node[above] {$h$} (f3c);
    \draw [->] (f3c) to [out=120,  in=60]node[above] {$h$} (f2c);
\end{tikzpicture}
    \caption{}
    \label{fig:examplecubic1portrait}
\end{figure}
\end{center}
Since $\left( -3h^{n}(\gamma_1)h^{n}(\gamma_2)\right)_{n \in \ZZ_{\geq 1}}  = \left( 4,2,1,2,1\ldots \right) \subset \FF_7^2$, then Condition \ref{cond1} of Proposition \ref{prop:cubic} is always satisfied. 

 Now, let $\beta_1$ be a root of $h(x)$. To determine $\mu_1\in \FF_7[\beta_1]$ we first note that requiring $\mu$ such that 
 \[81\mu^2 \equiv -4(6)^3-27(2-\beta_1)^2 \pmod 7\]
 is equivalent to 
 \[\mu^2 \equiv 2-\beta_1+2\beta_1^2 \pmod{7}.\] We see that $\mu = 2\beta_1^2+3 \mod 7$ satisfies this relationship. Then, by setting $\sqrt{-3} = 2$ in $\FF_7$ (or, $\sqrt{-3} = 5$) we compute that 
    \[\frac{1}{2}(-(2-\beta_1)+(2\beta_1+3)\sqrt{-3})\equiv \frac{1}{2}(-(2-\beta_1)+(2\beta_1^2+3)\sqrt{-3})\pmod{7}\]
    which simplifies to $2\beta_1^2-3\beta_1+2$ (or $2\beta_1^2-\beta_1+2$, respectively).
     Plugging this value into the norm function yields
    \[N(2\beta_1^2-3\beta_1+2)=(2\beta_1^2-3\beta_1+2)(2\beta_1^2-3\beta_1+2)^7(2\beta_1^2-3\beta_1+2)^{7^2}\equiv 4\pmod{7},\]
    (or $5$, respectively) which is not a cube in $\FF_7$. Therefore, $h^2(x)$ is irreducible by Proposition \ref{prop:cubic}. We can similarly compute $N(\frac{1}{2}(-(2-\beta_n)+(\mu_n)2)\pmod{7})$ and get a list of norms $(5,2,2,1)$ for up to $n=4$. Notice that $1$ is a cube implying that $\frac{1}{2}(-(2-\beta_4)+\mu_4\sqrt{-3})$ is a cube in $\FF_7(\sqrt{-3},\beta_4)$, and thus $h^5(x)$ is reducible. However, we have that $h^n(x)$ is  irreducible for $n=1,2,3,4$.
\end{example}

\begin{example}\label{ex: cubicsquares} In this example we will use Proposition \ref{prop:cubic} to show that  $h(x)=x^3+x+1$ is not dynamically irreducible over $\FF_{19}$. By Theorem \ref{thm:Dickson}, we have that $h(x)$ is irreducible. First, $-4(1)^3-27(1)^2 \equiv 7 \mod{19}$, which is a (non-zero) square in $\FF_{19}$. Setting up the second condition, we see that $81\mu^2 \equiv 7 \pmod{19}$ is satisfied by $\mu \in \lbrace 3, 16 \rbrace $. For either value of $\mu$, and either embedding of $\sqrt{-3}$ into $\FF_{19}$ we have that $\frac{1}{2}(-1+\mu\sqrt{-3}) \in \lbrace 15, 3 \rbrace$, neither of which are cubes in $\FF_{19}$, and thus $h(x)$ is irreducible. 
    
To determine the irreducibility of $h^2(x)$ we will use Proposition \ref{prop:cubic} with $\beta_0 = 0$. Let $\beta_1$ be a root of $h(x)$. The critical points of $h(x)$ are $\{\gamma_1,\gamma_2\}=\{5,14\}$ in $\FF_{19}$ and the critical orbit is shown in Figure \ref{fig:examplecubic2portrait} below.
\begin{center}
\begin{figure}[h]
\begin{tikzpicture}
    \node(c) at (0,0) {$5$};
    \node (fc) at (1.5,0) {$17$};
    \node (f2c) at (3,0) {$10$};
    \node (f3c) at (4.5,0) {$4$};
    \node (f4c) at (6,0) {$12$};
    \node (c2) at (4.5,1.5) {$-5$};
    \draw[->] (c) --node[above] {$h$} (fc);
    \draw[->] (fc) --node[above] {$h$} (f2c);
    \draw[->] (f2c) --node[above] {$h$} (f3c);
    \draw[->] (c2) --node[right] {$h$} (f3c);
    \draw[->] (f3c) --node[above] {$h$} (f4c);
    \draw[->] (f4c) to [looseness=4, out=-30,  in=30]node[right] {$h$} (f4c);
\end{tikzpicture}
    \caption{}
    \label{fig:examplecubic2portrait}
\end{figure}
\end{center}
    Then for condition \ref{cond1} for we have that
    \[\left( -3h^{n+1}(\gamma_1) h^{n+1}(\gamma_2)\right)_{n\in \ZZ_{\geq 0}}= \left( 5,1,8,5,5, \ldots\right) \]
    We note that $8$ is not a square in $\FF_{19}$, and therefore the first condition of Proposition \ref{prop:cubic} fails at $n=2$ so we know that $h^3(x)$ is reducible over $\FF_{19}$.  
\end{example}

\begin{remark}\label{rmk:reducibility}
        We apply similar techniques to those used in Examples \ref{ex:cubiccubes} and \ref{ex: cubicsquares} to implement Algorithm~\ref{alg:peng_algorithm} to determine if arbitrary cubic polynomials are potentially dynamically irreducible. Using Algorithm~\ref{alg:peng_algorithm}, along with other computational techniques, we obtain that there are no dynamically irreducible cubics of the form $h(x) = x^3+b_1x+b_0$ over $\FF_p$ for 
        $p\in \lbrace 5, 11, 13, 17, 23, 31, 41 \rbrace$. We note that in many cases looking for squares in the sequence $\left(-3h(\gamma_1)h(\gamma_2)\right)_{n\geq 1}$ shows that $h(x)$ cannot be dynamically irreducible. 
\end{remark}

There is one family of polynomials for which it is possible to apply Theorem \ref{thm:Dickson} to determine dynamical irreducibility working only over $\FF_q$. In particular, Chu proved the following, which gives a family of dynamically irreducible polynomials after conjugation.

\begin{theorem}[Theorem 3, \cite{Chu}]\label{thm:Chu} Let $q=p^s$, $p>3$ and $g(x)\neq x\pm 2$ be an irreducible polynomial over $\FF_q$ with degree $m$. Define $f_0(x)=g(x)$ and $f_{k+1}(x)=f_k(x^3-3x)$ for all $k\geq 0$. Then $f_0(x), f_1(x), f_2(x),\dots$ are all irreducible over $\FF_q$ if and only if for any root $\alpha$ of $g(x)$ we have $-3(\alpha^2-4)$ is a nonzero square in $\FF_{q^m}$ and the roots of $x^2-\alpha x+1$ are not cubes in 
$\FF_q(\sqrt{-3})$.
\end{theorem}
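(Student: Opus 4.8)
The plan is to strip off the iterates one at a time. Write $\phi(x)=x^3-3x$, so that $f_{k+1}=f_k\circ\phi$. Assuming inductively that $f_k$ is irreducible with root $\beta_k$ (so $\FF_q(\beta_k)=\FF_{q^{m3^k}}$), Capelli's Lemma (Lemma~\ref{Lemma:Capelli's}) shows that $f_{k+1}$ is irreducible over $\FF_q$ if and only if $x^3-3x-\beta_k$ is irreducible over $\FF_{q^{m3^k}}$. Apply Dickson's Theorem~\ref{thm:Dickson} with $a_1=-3$ and $a_0=-\beta_k$; the values $\beta_k\in\{0,\pm2\}$ can occur only when $g$ is linear ($\beta_k=\pm2$ forces $g=x\mp2$, which is excluded, and $\beta_k=0$ forces $g=x$, a case that is vacuous since then $f_1=x^3-3x$ is already reducible while the condition on $\alpha$ below also fails). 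Dickson's discriminant condition now reads ``$-27(\beta_k^2-4)$ is a nonzero square in $\FF_{q^{m3^k}}$''; and since $81\mu_k^2=-27(\beta_k^2-4)$ gives $3\mu_k^2=4-\beta_k^2$, the two elements $\tfrac12(\beta_k\pm\mu_k\sqrt{-3})$ have sum $\beta_k$ and product $\tfrac14(\beta_k^2+3\mu_k^2)=1$, hence are the roots of $x^2-\beta_k x+1$, so Dickson's cube condition reads ``the roots of $x^2-\beta_k x+1$ are not cubes in $\FF_{q^{m3^k}}(\sqrt{-3})$''. For $k=0$ and $\beta_0=\alpha$ these are exactly the two conditions in the statement (reading the field there as $\FF_q(\alpha,\sqrt{-3})$), so applying this reduction to $f_0$ and $f_1$ alone already gives the ``only if'' direction.

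For the ``if'' direction I assume the two conditions hold for a root $\alpha$ of $g$ and induct on $k$, the base case being $f_0=g$ irreducible. Everything is organized around the substitution $x=u+u^{-1}$, under which $\phi$ becomes $u\mapsto u^3$. Fix a compatible chain $\alpha=\beta_0,\beta_1,\beta_2,\dots$ with $\phi(\beta_{k+1})=\beta_k$, and roots $u_k$ of $x^2-\beta_k x+1$ with $u_k^3=u_{k-1}$, so that $u_k^{3^k}=u_0$; the discriminant condition on $\alpha$ is precisely the assertion that $u_0\in\mathcal{L}_0:=\FF_{q^m}(\sqrt{-3})$ and $u_0\neq\pm1$. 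Set $\mathcal{L}_k:=\FF_{q^{m3^k}}(\sqrt{-3})$; since $\zeta_3\in\mathcal{L}_0$ one has $Q:=|\mathcal{L}_0|\equiv1\bmod3$, $|\mathcal{L}_k|=Q^{3^k}$, and $[\mathcal{L}_k:\mathcal{L}_0]=3^k$. To run the reduction I must recover Dickson's two conditions for $\beta_k$. The discriminant one follows from the Chebyshev-type identity $\beta_0^2-4=(\beta_k^2-4)\,R(\beta_k)^2$, where $R\in\ZZ[x]$ satisfies $\frac{u^{3^k}-u^{-3^k}}{u-u^{-1}}=R(u+u^{-1})$ and $R(\beta_k)\neq0$: combined with the fact that an element of $\FF_{q^m}$ retains its square class in the odd-degree extension $\FF_{q^{m3^k}}$, this puts $\beta_k^2-4$ and $-3$ in the same square class of $\FF_{q^{m3^k}}$, so $-27(\beta_k^2-4)$ is a nonzero square there.

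For the cube condition at level $k$: since $u_0$ is not a cube in $\mathcal{L}_0$, the Vahlen--Capelli criterion (Theorem~\ref{Lemma: constant term rth power}) makes $x^{3^k}-u_0$ irreducible over $\mathcal{L}_0$, so its root $u_k$ generates $\mathcal{L}_k$ over $\mathcal{L}_0$. I then compare $3$-adic valuations of multiplicative orders: ``$u_0$ not a cube'' means $v_3(\mathrm{ord}\,u_0)=v_3(Q-1)$; since $u_k^{3^k}=u_0$ this forces $v_3(\mathrm{ord}\,u_k)=v_3(Q-1)+k$; and lifting the exponent gives $v_3(|\mathcal{L}_k^\times|)=v_3(Q^{3^k}-1)=v_3(Q-1)+k$. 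Thus $\mathrm{ord}\,u_k$ has the largest possible $3$-part in $\mathcal{L}_k^\times$, so neither $u_k$ nor $u_k^{-1}$ is a cube in $\mathcal{L}_k$. Feeding both conditions back into the Capelli--Dickson reduction gives $f_{k+1}$ irreducible, closing the induction. One should also check that the two conditions in the statement are independent of the choice of root $\alpha$ of $g$, which holds because they are stable under $\Gal(\FF_{q^m}/\FF_q)$.

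The main obstacle is the cube condition in the inductive step. Since $u_0=u_k^{3^k}$ is manifestly a cube in $\mathcal{L}_k$, a straightforward norm descent (Lemma~\ref{lem:rthroot}) destroys exactly the information one wants, which forces the argument through multiplicative orders along the tower $\mathcal{L}_0\subset\mathcal{L}_1\subset\cdots$; getting the lifting-the-exponent bookkeeping right, while keeping track of the dichotomy $\sqrt{-3}\in\FF_{q^m}$ versus $\sqrt{-3}\notin\FF_{q^m}$ (which also decides whether $\beta_k^2-4$ is a square), is where the real work is. Dealing with the excluded and degenerate linear cases is routine by comparison.
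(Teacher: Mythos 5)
This statement is imported: the paper quotes it as Theorem~3 of \cite{Chu} and supplies no proof, using it only as a black box to derive Proposition~\ref{prop:chu_irred} (the conjugated case $m=1$). So there is nothing in the paper to compare your argument against; judged on its own, your proof is correct and complete, and it follows the route one would expect for a Dickson-polynomial construction. The Capelli--Dickson reduction is right: for $x^3-3x-\beta_k$ one has $-4a_1^3-27a_0^2=-27(\beta_k^2-4)$, the elements $\tfrac{1}{2}(\beta_k\pm\mu_k\sqrt{-3})$ are indeed the two roots of $x^2-\beta_k x+1$ (sum $\beta_k$, product $1$), and the degenerate values $\beta_k\in\{0,\pm2\}$ are fixed by $x\mapsto x^3-3x$ and hence pull back to the excluded or vacuous linear cases. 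The identity $\alpha^2-4=(\beta_k^2-4)R(\beta_k)^2$ correctly transports the square condition up the tower (only the implication ``square at level $0$ implies square at level $k$'' is needed, and squares stay squares in any extension), and the $3$-adic order bookkeeping with lifting-the-exponent correctly yields that $u_k$ is a non-cube in $\mathcal{L}_k$. Two remarks. Your reading of the ambient field as $\FF_{q^m}(\sqrt{-3})=\FF_q(\alpha,\sqrt{-3})$ rather than the literal $\FF_q(\sqrt{-3})$ is the right one; the paper only ever applies the theorem with $m=1$, where the two agree. And your parting claim that norm descent ``destroys exactly the information one wants'' undersells Lemma~\ref{lem:rthroot}: writing $M=(Q^{3^k}-1)/(Q-1)=3^kM'$ with $3\nmid M'$ gives $N_{\mathcal{L}_k/\mathcal{L}_0}(u_k)=u_k^{M}=u_0^{M'}$, which is a non-cube in $\mathcal{L}_0$ precisely when $u_0$ is, so the cube condition at level $k$ does follow from a one-line norm computation; your order argument is an equivalent but slightly longer route to the same place. (Minor: your gloss of the discriminant hypothesis as ``precisely'' $u_0\in\mathcal{L}_0$ and $u_0\neq\pm1$ is only a one-way implication when $\sqrt{-3}\notin\FF_{q^m}$, since then every element of $\FF_{q^m}$ becomes a square in $\mathcal{L}_0=\FF_{q^{2m}}$; but the forward implication is the only one you use.)
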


\begin{proposition}\label{prop:chu_irred} Let $q=p^s$, $p>3$. Then $x^3+3\alpha x^2+(3\alpha^2-3)x+(\alpha^3-4\alpha)$ with $\alpha\neq \pm 2$ is dynamically irreducible over $\FF_q$ if and only if $-3(\alpha^2-4)$ is a nonzero square in $\FF_{q}$ and the roots $\frac{1}{2}(\alpha\pm\sqrt{\alpha^2-4})$ of $x^2-\alpha x+1$ are not cubes in 
$\FF_q(\sqrt{-3})$.
\end{proposition}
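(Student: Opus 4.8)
The plan is to recognize that Proposition~\ref{prop:chu_irred} is a direct corollary of Chu's Theorem~\ref{thm:Chu} in the case where the starting polynomial $g$ is linear, combined with the conjugation principle of Lemma~\ref{Lemma: Conjugation}. First I would take $g(x) = x - \alpha$, which is trivially irreducible over $\FF_q$ of degree $m = 1$, and with $\alpha \neq \pm 2$ we have $g(x) \neq x \pm 2$ as required by the hypothesis of Theorem~\ref{thm:Chu}. Since $m=1$, the field $\FF_{q^m}$ is just $\FF_q$, so the conditions ``$-3(\alpha^2-4)$ is a nonzero square in $\FF_{q^m}$'' and ``the roots of $x^2 - \alpha x + 1$ are not cubes in $\FF_q(\sqrt{-3})$'' become exactly the conditions stated in the proposition, with the roots of $x^2 - \alpha x + 1$ being $\tfrac{1}{2}(\alpha \pm \sqrt{\alpha^2-4})$ by the quadratic formula.

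Next I would identify the iterated family $f_k$ from Theorem~\ref{thm:Chu} with iterates of a single cubic polynomial. With $\sigma(x) = x^3 - 3x$, we have $f_k(x) = g(\sigma^k(x)) = \sigma^k(x) - \alpha$, so $f_k$ irreducible for all $k$ is equivalent to $\sigma^k(x) - \alpha$ irreducible for all $k$, i.e., to the pair $(\sigma, \alpha)$ being dynamically irreducible. Then I would apply Lemma~\ref{Lemma: Conjugation}: conjugating $\sigma$ by $\varphi(x) = x + \alpha$ (taking $c = 1$, the shift $= \alpha$) produces $\sigma^\varphi(x) = \varphi(\sigma(\varphi^{-1}(x))) = (x-\alpha)^3 - 3(x-\alpha) + \alpha$, which expands to $x^3 + 3\alpha x^2 + (3\alpha^2 - 3)x + (\alpha^3 - 4\alpha)$ — precisely the polynomial in the proposition statement. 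Hence, by Lemma~\ref{Lemma: Conjugation}, this cubic is dynamically irreducible over $\FF_q$ if and only if the pair $(\sigma, \alpha)$ is dynamically irreducible, which by the above chain of equivalences holds if and only if Chu's two conditions are satisfied.

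The only genuinely computational step is verifying the expansion $(x-\alpha)^3 - 3(x-\alpha) + \alpha = x^3 + 3\alpha x^2 + (3\alpha^2-3)x + (\alpha^3 - 4\alpha)$, which is a routine check: the cubic term gives $x^3 - 3\alpha x^2 + 3\alpha^2 x - \alpha^3$, the linear correction gives $-3x + 3\alpha$, and adding $\alpha$ collects to $x^3 - 3\alpha x^2 + (3\alpha^2 - 3)x + (-\alpha^3 + 4\alpha)$ — so in fact the correct conjugator is $\varphi(x) = x - \alpha$ rather than $x + \alpha$ (i.e., I should conjugate so the shift brings $-\alpha$ to $0$, matching the sign conventions), and I would double-check this sign at the writing stage. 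I do not anticipate any serious obstacle: the main subtlety is simply being careful about which direction the conjugation goes and confirming the coefficient arithmetic matches the stated polynomial exactly, together with noting that the $m=1$ specialization legitimately collapses $\FF_{q^m}$ to $\FF_q$ so that no field-extension bookkeeping is needed.
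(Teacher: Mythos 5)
Your proposal is correct and follows essentially the same route as the paper's own proof: apply Theorem~\ref{thm:Chu} with $g(x)=x-\alpha$ (so $m=1$ and $\FF_{q^m}=\FF_q$), observe that the $f_k$ are exactly $\sigma^k(x)-\alpha$ for $\sigma(x)=x^3-3x$, and transfer via Lemma~\ref{Lemma: Conjugation} to the conjugate $\sigma(x+\alpha)-\alpha=x^3+3\alpha x^2+(3\alpha^2-3)x+(\alpha^3-4\alpha)$. Your sign correction at the end lands on precisely the conjugation the paper uses.
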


Note, when $\alpha =\pm 2$, $x^3+3\alpha x^2+(3\alpha^2-3)x+(\alpha^3-4\alpha)=x^3\pm 6 x^2+9x$ which is reducible since $0$ is a root.

\begin{proof}
Let $f(x)=x^3-3x$ and $g(x)=x-\alpha$ with $\alpha\neq \pm 2$. By Theorem \ref{thm:Chu}, the pair $(f,\alpha)$ is dynamically irreducible over $\FF_q$ if and only if  $-3(\alpha^2-4)$ is a nonzero square in $\FF_{q}$ and the roots of $x^2-\alpha x+1$ are not cubes in 
    \[\FF_q(\sqrt{-3})=\begin{cases}
        \FF_{q} \text{ if } 3\mid (q-1),\\
        \FF_{q^{2}} \text{ if } 3\mid (q-2).
    \end{cases}\]

Using Lemma \ref{Lemma: Conjugation}, $f(x+\alpha)-\alpha = x^3+3\alpha x^2+(3\alpha^2-3)x+(\alpha^3-4\alpha)$ is dynamically irreducible if and only if the pair $(f,\alpha)$ is dynamically irreducible.
\end{proof}

\begin{example}\label{ex:Propcubic}
Working over $\FF_7$ we can use Proposition~\ref{prop:chu_irred} to show that $x^3\mp 3x^2\pm 3$ is dynamically irreducible over $\FF_7.$ In fact they are the only cubic polynomials in the form of Proposition \ref{prop:chu_irred} over $\FF_7$ that are dynamically irreducible. 
\end{example}

\begin{example}\label{ex:Chu}
Table~\ref{tab:cubeover3} shows data for reducibility of iterates of cubics over $\FF_3.$ Here we don't use our algorithm since it only applies to $p>3,$ and instead rely on built in SageMath functions that are using the modular arithmetic algorithms. Through this we can see that there are no dynamically irreducible cubics over $\FF_3$. Table~\ref{tab:cubeover3} also includes information about the smallest reducible iterate of each polynomial. 
 \def\arraystretch{1.2}
    \begin{table}[h]
\centering
\caption{
}
    \begin{tabular}{c|c}

    Polynomials & $\min\lbrace n:f^n(x) \text{ is reducible}\rbrace$   \\\hline

        $x^3 + 2x + 1$,
 $x^3 + 2x + 2$,
 $x^3 + x^2 + 2$,   & \multirow{4}{*}{$2$}
 \\

 $x^3 + x^2 + x + 2$,
 $x^3 + 2x^2 + 1$,
 $x^3 + 2x^2 + x + 1$, & \\
 $2x^3 + x + 1$,
 $2x^3 + x + 2$,
 $2x^3 + x^2 + x + 1$, & \\
 $2x^3 + x^2 + 2x + 2$,
 $2x^3 + 2x^2 + x + 2$,
 $2x^3 + 2x^2 + 2x + 1$ &  \\ \hline
      $x^3+x^2+2x+1$, $x^3+2x^2+2x+2$   & $3$ \\ \hline
     $2x^3+x^2+2$, $2x^3+2x^2+1$    & $4$ \\
    \end{tabular}\label{tab:cubeover3}
    \end{table}
\end{example}

 \section{Shifted Linearized Polynomials over Finite Fields}\label{Sec:Artin}

In this section, we explore dynamical irreducibility of shifted linearized polynomials  over finite fields. A \textit{linearized polynomial} over $\FF_q$ with $q=p^s$ is a polynomial of the form 
\[a_nx^{p^n}+a_{n-1}x^{p^{n-1}}+\dots+a_1x^p+a_0x.\]
These polynomials are called linearized because they represent a linear mapping on $\FF_q$.
A theorem of Agou discussed in \cite{cohen_1989} implies a shifted linearized polynomial of degree $p^n$ is reducible unless $n=1$ or $p=n=2$. Thus, iterates of shifted linearized polynomials are always dynamically reducible; in fact the second iterate is reducible unless $p=\deg f=2$, and in this case the third iterate must be reducible. In \cite{Ahmadi}, Ahmadi gives an alternate proof that the third iterate of a quadratic polynomial must be reducible over a field of characteristic $2$. In \cite{Ostafe_et_al.}, the authors apply this argument to show the third iterate of a shifted linearized cubic must be reducible over a field of characteristic $3$. We push this argument further to show a shifted linear degree $p$ polynomial over a field of characteristic $p$ must have reducible second iterate unless $p=2$, in which case the polynomial must have reducible third iterate. As a consequence, it follows that no Artin-Schreier polynomials are dynamically irreducible.  

Consider $f(x)=a_px^p-a_1x-a_0$ over a finite field of characteristic $p$. The following lemma provides necessary and sufficient conditions for when a polynomial of this form is irreducible. 

\begin{lemma}[\cite{cohen_1989}, Lemma 2]\label{lemma:Cohen}
    Let $q=p^s$. The polynomial $f(x)=x^p-a_1x-a_0$ is irreducible over $\FF_q$ if and only if $a_1=A^{p-1}$ for some $A\in \FF_q$ and $\Tr_{\FF_{q}/\FF_{p}}\left(\frac{a_0}{A^p}\right)\neq 0$.
\end{lemma}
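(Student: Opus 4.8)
The plan is to reduce $f(x)=x^p-a_1x-a_0$ to an Artin--Schreier polynomial $y^p-y-c$ whenever that is possible, and to show that the reduction can be carried out, and preserves irreducibility, exactly when the stated hypotheses hold. I would split into three cases according to the nature of $a_1$: (i) $a_1=0$; (ii) $a_1=A^{p-1}$ for some $A\in\FF_q^\times$; (iii) $a_1\neq 0$ but $a_1$ is not a $(p-1)$-th power in $\FF_q$. In cases (i) and (iii) I will show $f$ is reducible while the right-hand condition also fails, and in case (ii) I will show $f$ is irreducible if and only if $\Tr_{\FF_q/\FF_p}(a_0/A^p)\neq 0$.

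Case (i) is immediate: since Frobenius is a bijection on $\FF_q$ we may write $a_0=b^p$, so $f(x)=(x-b)^p$ is reducible; and $a_1=0$ is not of the form $A^{p-1}$ with $A\in\FF_q^\times$ (the element $A$ is forced to be nonzero, since $A^p$ sits in a denominator), so both sides fail. For case (ii), the substitution $x=Ay$ gives $f(Ay)=A^p\bigl(y^p-y-a_0/A^p\bigr)$; equivalently, conjugating by $\varphi(x)=A^{-1}x$ and applying Lemma~\ref{Lemma: Conjugation} with $\alpha=0$, the irreducibility of $f$ over $\FF_q$ is equivalent to that of $g(y):=y^p-y-a_0/A^p$. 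I would then invoke the classical Artin--Schreier criterion: $y^p-y-c$ is irreducible over $\FF_q$ if and only if $c$ is not in the image of the $\FF_p$-linear map $\wp\colon z\mapsto z^p-z$, and that image is precisely $\ker(\Tr_{\FF_q/\FF_p})$ (it is contained in the kernel because $\Tr$ commutes with Frobenius, and both subgroups have index $p$). If a fully self-contained write-up is wanted, this is just the short Frobenius computation: for a root $b$ of $g$ one gets $\mathrm{Frob}_q^{\,t}(b)=b+t\,\Tr_{\FF_q/\FF_p}(c)$, so $[\FF_q(b):\FF_q]=p$ when the trace is nonzero and $g$ splits completely when it is zero. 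Finally I would note the condition is independent of the choice of $A$: two $(p-1)$-th roots of $a_1$ in $\FF_q$ differ by a factor in $\FF_p^\times$, which rescales $a_0/A^p$, and hence its trace, by the same nonzero element of $\FF_p$.

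For case (iii), I claim $f$ is reducible; the right-hand condition obviously fails, so the biconditional holds. The roots of $f$ in $\overline{\FF_q}$ form the set $\{\alpha+cA:c\in\FF_p\}$ for a fixed root $\alpha$ and a fixed $A\in\overline{\FF_q}$ with $A^{p-1}=a_1$: the difference $\delta$ of two roots satisfies $\delta^p=a_1\delta$, whose solution set is $\FF_p\cdot A$ because $x^{p-1}-1=\prod_{c\in\FF_p^\times}(x-c)$ in characteristic $p$. A Frobenius computation ($\mathrm{Frob}_q(A)=cA$ for some $c\in\FF_p^\times$, whence $\mathrm{Frob}_q^{\,k}(A)=c^kA$) shows $m:=[\FF_q(A):\FF_q]=\mathrm{ord}(c)$ divides $p-1$, and $m>1$ by the hypothesis that $a_1$ is not a $(p-1)$-th power in $\FF_q$. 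If $f$ were irreducible over $\FF_q$, its splitting field would be $\FF_{q^p}$ (the splitting field of any irreducible degree-$p$ polynomial over a finite field), which would contain $A$, forcing $m\mid p$; together with $m\mid p-1$ this gives $m=1$, a contradiction. Hence $f$ is reducible.

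The step I expect to require the most care is case (iii): getting the structure of the root set right and combining the two number-theoretic facts — that $[\FF_q(A):\FF_q]$ divides $p-1$, and that a degree-$p$ irreducible over a finite field has splitting field $\FF_{q^p}$ — into the clean contradiction. The Artin--Schreier input in case (ii) is standard (and citable), the well-definedness check is a one-liner, and case (i) is trivial; so apart from the bookkeeping in case (iii) everything is routine finite-field algebra.
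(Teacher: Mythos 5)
The paper does not prove this lemma at all---it is quoted directly from Cohen \cite{cohen_1989}---so there is no internal argument to measure yours against; judged on its own terms, your proposal is correct and complete. Your route is the standard derivation of Cohen's criterion: the substitution $x=Ay$ converts $f$ into $A^p\bigl(y^p-y-a_0/A^p\bigr)$, reducing case (ii) to the Artin--Schreier criterion, which you justify correctly via $\mathrm{Frob}_q(b)=b+\Tr_{\FF_q/\FF_p}(c)$; the degenerate case $a_1=0$ gives $(x-b)^p$; and in case (iii) the description of the root set as $\{\alpha+cA:c\in\FF_p\}$, combined with the facts that $[\FF_q(A):\FF_q]$ divides $p-1$ while irreducibility of $f$ would force it to divide $p$, yields the contradiction cleanly (and this case is vacuous for $p=2$, where every element is a $(p-1)$-th power). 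Two small points are worth retaining in a final write-up: the statement's ``$A\in\FF_q$'' must be read as $A\in\FF_q^{\times}$ (as you observe, $A^p$ sits in a denominator, so $a_1=0$ falls outside the criterion and must be handled as you do), and your verification that the trace condition is independent of the choice of $(p-1)$-th root $A$ is genuinely needed for the stated criterion to be well posed.
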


We use this lemma to prove the main result in this section.

\begin{theorem}\label{Thm:Artin-Schreier}
    Let $p$ be a prime and $q=p^s$ for $s$ a positive integer. If $f(x)=a_px^p-a_1x-a_0\in \FF_q[x]$ with $a_p a_1\neq 0$, then $f^2(x)$ is reducible if $p\geq 3$ and $f^3(x)$ is reducible if $p=2$. 
\end{theorem}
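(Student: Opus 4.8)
The plan is to reduce the reducibility of $f^2$ (resp.\ $f^3$) to an application of Lemma~\ref{lemma:Cohen} combined with the trace-transitivity formula in Theorem~\ref{Lemma: Transitivity of Norm and Trace}, together with the observation following that theorem that $\Tr_{L/K}(\alpha) = [L:K]\cdot\alpha$ vanishes whenever $\operatorname{char} K \mid [L:K]$. First I would dispose of the trivial cases: if $f$ itself is reducible there is nothing to prove, so I may assume $f$ is irreducible, and by Lemma~\ref{lemma:Cohen} (after dividing through by $a_p$ to make the polynomial monic of the shape $x^p - a_1 x - a_0$, which is legitimate since the leading coefficient $a_p$ of a degree-$p$ polynomial is automatically a $(p-1)$st power up to the scaling used in Lemma~\ref{Lemma: Conjugation}, or simply by applying Cohen's lemma in the stated normalized form) we get $a_1 = A^{p-1}$ for some $A \in \FF_q^\times$ and $\Tr_{\FF_q/\FF_p}(a_0/A^p)\neq 0$. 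Also, since $f$ is irreducible of degree $p$, a root $\theta$ of $f$ generates $\FF_{q^p}$, so $[\FF_q(\theta):\FF_q] = p$.

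For the case $p \geq 3$: by Capelli's Lemma (Lemma~\ref{Lemma:Capelli's}), $f^2 = f\circ f$ is irreducible over $\FF_q$ if and only if $f$ is irreducible over $\FF_q$ and $f(x) - \theta$ is irreducible over $\FF_q(\theta) = \FF_{q^p}$, where $\theta$ is a root of $f$. Now $f(x) - \theta = a_p x^p - a_1 x - (a_0 + \theta)$, which after normalizing is $x^p - (a_1/a_p) x - ((a_0+\theta)/a_p)$. Apply Lemma~\ref{lemma:Cohen} over $\FF_{q^p}$: for this to be irreducible we would need, in particular, $\Tr_{\FF_{q^p}/\FF_p}\!\left(\tfrac{(a_0+\theta)/a_p}{B^p}\right) \neq 0$ for the appropriate $B$ with $B^{p-1} = a_1/a_p$ in $\FF_{q^p}$. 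The key step is to compute this trace using transitivity: $\Tr_{\FF_{q^p}/\FF_p} = \Tr_{\FF_q/\FF_p}\circ \Tr_{\FF_{q^p}/\FF_q}$, and $\Tr_{\FF_{q^p}/\FF_q}$ of the relevant element splits, by $\FF_q$-linearity, into a part coming from $a_0/a_p$ (an element of $\FF_q$, on which $\Tr_{\FF_{q^p}/\FF_q}$ acts as multiplication by $p = 0$ in characteristic $p$) plus a part coming from $\theta/a_p$, where $\Tr_{\FF_{q^p}/\FF_q}(\theta)$ is (up to sign and the coefficient $a_1/a_p$) read off from the polynomial $f$ — specifically the coefficient of $x$ in $x^p - (a_1/a_p)x - (a_0/a_p)$ forces $\Tr_{\FF_{q^p}/\FF_q}(\theta)$ to be a specific element of $\FF_q$, and then one more application of multiplication-by-$p$ via $\Tr_{\FF_q/\FF_p}$ kills it. The conclusion should be that $\Tr_{\FF_{q^p}/\FF_p}$ of the relevant quantity is $0$, so Lemma~\ref{lemma:Cohen} fails and $f(x)-\theta$ is reducible over $\FF_{q^p}$, hence $f^2$ is reducible. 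The care needed here is that $B$ (the $(p-1)$st root of $a_1/a_p$ in $\FF_{q^p}$) need not lie in $\FF_q$, so the element $\frac{(a_0+\theta)/a_p}{B^p}$ is not obviously of a form where transitivity applies cleanly; I expect this bookkeeping — showing the $B^p$ in the denominator can be absorbed or that the trace still vanishes — to be the main obstacle, and it likely requires writing $B^p = B \cdot B^{p-1} = B \cdot (a_1/a_p)$ and tracking the dependence carefully, or alternatively choosing a clean generator.

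For the case $p = 2$: here $f^2$ can be irreducible (this is exactly the exceptional case in Agou's theorem), so one must go to the third iterate. The argument is parallel: assuming $f$ and $f^2$ are both irreducible, let $\theta_2$ be a root of $f^2$, so $[\FF_q(\theta_2):\FF_q] = 4$ and $\operatorname{char}\FF_q = 2 \mid 4$. By Capelli, $f^3$ is irreducible iff $f(x) - \theta_2$ is irreducible over $\FF_q(\theta_2) = \FF_{q^4}$; applying Cohen's lemma over $\FF_{q^4}$ and computing $\Tr_{\FF_{q^4}/\FF_2} = \Tr_{\FF_q/\FF_2}\circ\Tr_{\FF_{q^4}/\FF_q}$, the inner trace $\Tr_{\FF_{q^4}/\FF_q}$ is multiplication by $4 \equiv 0$ on elements of $\FF_q$ and, for the $\theta_2$-dependent part, factors through $\Tr_{\FF_{q^4}/\FF_{q^2}}$ which is multiplication by $[\FF_{q^4}:\FF_{q^2}] = 2 \equiv 0$ — so again the obstructing trace vanishes and $f^3$ is forced to be reducible. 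I would present $p=2$ as a variant of the same computation rather than a separate argument, emphasizing that the only difference is one extra layer in the tower of trace maps, and that in both cases the mechanism is: the degree of the extension introduced by the previous iterate is divisible by $p$, so the transitivity of trace forces the Cohen-lemma trace condition to fail at the next step.
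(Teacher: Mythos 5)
Your skeleton --- Capelli's Lemma to reduce to irreducibility of $f(x)-\theta$ over $\FF_{q^p}$, Cohen's lemma to turn that into a trace condition, and trace transitivity to evaluate the trace --- is exactly the paper's route, and your handling of the $a_0$-part (the trace from $\FF_{q^p}$ down to $\FF_q$ acts as multiplication by $p=0$ on elements of $\FF_q$) is correct. The worry about the $(p-1)$st root $B$ is a non-issue: the $A\in\FF_q$ furnished by Cohen's lemma for $f$ itself already serves over $\FF_{q^p}$, and any two choices of $(p-1)$st root of $a_1/a_p$ differ by an element of $\FF_p^\times$, which only rescales the trace by a nonzero constant of $\FF_p$.

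The genuine gap is in the $\theta$-part. You claim $\Tr_{\FF_{q^p}/\FF_q}(\theta)$ is read off from the coefficient of $x$, and that the result is then killed by ``one more application of multiplication-by-$p$ via $\Tr_{\FF_q/\FF_p}$.'' Both halves fail: the trace of $\theta$ is minus the coefficient of $x^{p-1}$ (not of $x$) in the monic minimal polynomial, and $\Tr_{\FF_q/\FF_p}$ is \emph{not} multiplication by $p$ --- it is multiplication by $[\FF_q:\FF_p]=s$ only on elements of $\FF_p$, and on a general element of $\FF_q$ it is a surjective $\FF_p$-linear map, certainly not zero. The same error recurs in your $p=2$ case, where you apply ``multiplication by $[\FF_{q^4}:\FF_{q^2}]=2$'' to the $\theta_2$-dependent part, even though $\theta_2$ does not lie in $\FF_{q^2}$. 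A telltale sign that the mechanism is wrong: your stated principle (``the degree of the extension introduced by the previous iterate is divisible by $p$, so the trace condition fails at the next step'') applies equally to $p=2$ at the second iterate, and would thus ``prove'' $f^2$ reducible there too, contradicting the fact that $p=2$ is genuinely exceptional. The correct reason, which is what the paper uses and which actually distinguishes the two cases, is that for $p\geq 3$ the coefficient of $x^{p-1}$ in $f$ is zero, so $\Tr_{\FF_{q^p}/\FF_q}(\theta)=0$ outright and then $\Tr_{\FF_{q^p}/\FF_p}(\theta/(a_pA^p))=\Tr_{\FF_q/\FF_p}(0)=0$; for $p=2$ that coefficient is $-a_1\neq 0$, but the $x^3$-coefficient of $f^2$ vanishes in characteristic $2$, so the identical argument succeeds one iterate later with $\Tr_{\FF_{q^4}/\FF_q}(\theta_2)=0$.
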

\begin{proof} Note if $f^2(x)$ is reducible we are done. Suppose $f^2(x)$ is irreducible, let $\gamma$ be any root of $f(x)$. By Capelli's Lemma (Lemma \ref{Lemma:Capelli's}), $f(x)$ is irreducible over $\FF_q$ and $f(x)-\gamma$ is irreducible over $\FF_q(\gamma) = \FF_{q^p}$.  
Since $f(x)$ is irreducible, $h(x)=x^p-\frac{a_1}{a_p}x-\frac{a_0}{a_p}$ is also irreducible over $\FF_q$ so $\frac{a_1}{a_p}=A^{p-1}$ for $A\in \FF_{q}$ and $\Tr_{\FF_{q}/\FF_{p}}\left(\frac{a_0}{a_pA^p}\right)\neq 0$ by Lemma \ref{lemma:Cohen}. Also, since $f(x)-\gamma$ is irreducible over $\FF_{q^p}$, so is $x^p-\frac{a_1}{a_p}x-\frac{a_0+\gamma}{a_p}$ and hence by Lemma \ref{lemma:Cohen},  $\Tr_{\FF_{q^p}/\FF_{p}}\left(\frac{a_0+\gamma}{a_pA^p}\right)\neq 0$. 
 
Using properties of trace from Theorem \ref{Lemma: Transitivity of Norm and Trace}, we have 
    \begin{align*}
        \Tr_{\FF_{q^p}/\FF_{p}}\left(\frac{a_0+\gamma}{a_pA^p}\right)&=\Tr_{\FF_{q^p}/\FF_{p}}\left(\frac{a_0}{a_pA^p}\right)+\Tr_{\FF_{q^p}/\FF_{p}}\left(\frac{\gamma}{a_pA^p}\right)\\
        &=\Tr_{\FF_{q}/\FF_{p}}\left(\Tr_{\FF_{q^p}/\FF_{q}}\left(\frac{a_0}{a_pA^p}\right)\right)+\Tr_{\FF_{q^p}/\FF_{p}}\left(\frac{\gamma}{a_pA^p}\right)\\
        &=\Tr_{\FF_{q}/\FF_{p}}\left(0\right)+\Tr_{\FF_{q^p}/\FF_{p}}\left(\frac{\gamma}{a_pA^p}\right)\\
        &=\Tr_{\FF_{q^p}/\FF_{p}}\left(\frac{\gamma}{a_pA^p}\right).
    \end{align*}
    Hence, $\Tr_{\FF_{q^p}/\FF_{p}}\left(\frac{\gamma}{a_pA^p}\right)\neq 0$.

    Since $f(x)$ is irreducible over $\FF_q$, it is the minimal polynomial for $\gamma$ over $\FF_q$. When $p\geq 3$, this implies $\Tr_{\FF_{q^p}/\FF_{q}}\left(\frac{\gamma}{a_pA^p}\right)=0$ since the coefficient of the $p-1$ term of $f(x)$ is $0$ and thus,
    \[\Tr_{\FF_{q^p}/\FF_{p}}\left(\frac{\gamma}{a_pA^p}\right) = \Tr_{\FF_{q}/\FF_{p}}\left(\Tr_{\FF_{q^p}/\FF_{q}}\left(\frac{\gamma}{a_pA^p}\right)\right)=\Tr_{\FF_{q}/\FF_{p}}(0)=0,\]
    which is a contradiction.

    When $p=2$, let $\gamma'$ be a root of $f^2(x)$, then by a similar argument, if $f^3(x)$ is irreducible over $\FF_{q}$, then $f^2(x)-\gamma'$ is irreducible over $\FF_{q^4}$, so $\Tr_{\FF_{q^4}/\FF_q}\left(\frac{a_0+\gamma'}{a_2A^2}\right)\neq 0$, which implies $\Tr_{\FF_{q^4}/\FF_q}\left(\frac{\gamma'}{a_2A^2}\right)\neq 0$. On the other hand, since $f^2(x)$ is the minimal polynomial for $\gamma'$, $\Tr_{\FF_{q^4}/\FF_q}\left(\frac{\gamma'}{a_2A^2}\right)= 0$, which is a contradiction.
    \end{proof}

\textbf{Acknowledgements.} This work is based on a project that started during the Rethinking Number Theory 4 workshop. The authors thank the organizers for bringing the group together and facilitating the project. The authors would also like to thank Rafe Jones and Vefa Goksel for helpful feedback. The sixth author's research is partially supported by an AMS-Simons Travel Grant. 

\appendix
\section{Algorithm for Checking Irreducibility of an Iterated Cubic Polynomial over a Finite Field}\label{appendix}

\begin{algorithm}[H]
 \KwData{Takes in a cubic of the form $h(x)=x^3+b_1x+b_0$ over $\FF_p[x]$, $p\equiv 1\pmod{3}$.}
 \KwResult{Outputs dynamical irreducibility information for cubic $h(x)=x^3+b_1x+b_0$ over a given number of iterates}
 initialization\;
 initialize $g(x)=x$\;
 initialize counter=0\;

 \For{ i in range of iterates}{
  Find a root $\beta$ of $g$ over an extension field of $\FF_p[x]$\;  
  Create square\_check\_object$=-4b_1^3-27(b_0-\beta)^2$  \;
  \eIf{norm(square\_check\_object) is a square}{
    Create $\mu$=squareroot(square\_check\_object/81)\; 
   Create cube\_check\_object=$\frac{1}{2}(-(b_0-\beta)+\mu\sqrt{-3})$\;
   
   \eIf{norm(cube\_check\_object) is not a cube}{
   $h^{\text{counter}}$ is irreducible\;
   set counter=+1\;
   set g=h(g)\;
   go back to the beginning of the for loop section\;
    }
   {
  
   $h^{\text{counter}}$ is reducible \;
   }
   }
   {$h^{\text{counter}}$ is reducible\;
 }
 }
 \caption{A description of how to implement Proposition~\ref{prop:cubic} in a program. The algorithm was tested in SageMath~\cite{sagemath}.}\label{alg:peng_algorithm}
\end{algorithm}

\begin{remark} 
    Algorithm~\ref{alg:peng_algorithm} can be implemented for odd primes $p\equiv 2 \mod 3$, however it requires extending the field for $\sqrt{-3}$. Further, Algorithm~\ref{alg:peng_algorithm} can be generalized to any cubic $f(x)=a_3x^3+a_2x^2+a_1x+a_0$. We can first conjugate as in Equation \eqref{eq:cubiconjugate} and consider $h(x)=b_3x^3+b_1x+b_0.$ The algorithm can then be adjusted by the lead coefficient, $b_3,$ following the expressions in Proposition~\ref{prop:cubic}. 
\end{remark}

\bibliographystyle{plain}
\bibliography{RNT4}

\begin{thebibliography}{10}

\bibitem{Ahmadi}
O.~Ahmadi.
\newblock A note on stable quadratic polynomials over fields of characteristic
  two.
\newblock Available at { https://doi.org/10.48550/arXiv.0910.4556}, 2009.

\bibitem{Ahmadi_et_al}
Omran Ahmadi, Florian Luca, Alina Ostafe, and Igor~E. Shparlinski.
\newblock On stable quadratic polynomials.
\newblock {\em Glasgow Mathematical Journal}, 54(2):359–369, 2012.

\bibitem{trends}
Robert Benedetto, Patrick Ingram, Rafe Jones, Michelle Manes, Joseph~H.
  Silverman, and Thomas~J. Tucker.
\newblock Current trends and open problems in arithmetic dynamics.
\newblock {\em Bull. Amer. Math. Soc. (N.S.)}, 56(4):611--685, 2019.

\bibitem{BJ-AGR2}
Nigel Boston and Rafe Jones.
\newblock Arboreal galois representations.
\newblock {\em Geometriae Dedicata}, 124(1):27--35, February 2007.

\bibitem{Chu}
W.~M. Chu.
\newblock Construction of irreducible polynomials using cubic transformation.
\newblock {\em Applicable Algebra in Engineering, Communication and Computing},
  7(1):15--19, 1996.

\bibitem{cohen_1989}
Stephen~D. Cohen.
\newblock The reducibility theorem for linearised polynomials over finite
  fields.
\newblock {\em Bulletin of the Australian Mathematical Society},
  40(3):407–412, 1989.

\bibitem{DS}
Mohamed~O Darwish and Mohammad Sadek.
\newblock Eventual stability of pure polynomials over the rational field.
\newblock {\em arXiv preprint arXiv:2211.11095}, 2022.

\bibitem{demarketal}
David DeMark, Wade Hindes, Rafe Jones, Moses Misplon, Michael Stoll, and
  Michael Stoneman.
\newblock Eventually stable quadratic polynomials over {$\Bbb Q$}.
\newblock {\em New York J. Math.}, 26:526--561, 2020.

\bibitem{Dickson}
L.~E. Dickson.
\newblock Criteria for the irreducibility of functions in a finite field.
\newblock {\em Bull. Amer. Math. Soc.}, 13(1):1--8, 1906.

\bibitem{Goksel}
Vefa Goksel.
\newblock {Irreducibility of iterates of post-critically finite quadratic
  polynomials over $\mathbb {Q}$}.
\newblock {\em Rocky Mountain Journal of Mathematics}, 49(7):2155 -- 2174,
  2019.

\bibitem{goksel2023note}
Vefa Goksel.
\newblock A note on the factorization of iterated quadratics over finite
  fields, 2023.

\bibitem{GokselXiaBoston}
Vefa Goksel, Shixiang Xia, and Nigel Boston.
\newblock A refined conjecture for factorizations of iterates of quadratic
  polynomials over finite fields.
\newblock {\em Exp. Math.}, 24(3):304--311, 2015.

\bibitem{GomezNicolas}
Domingo Gomez and Alejandro~P. Nicol\'as.
\newblock An estimate on the number of stable quadratic polynomials.
\newblock {\em Finite Fields Appl.}, 16(6):401--405, 2010.

\bibitem{Ostafe_et_al.}
Domingo G\'{o}mez-P\'{e}rez, Alejandro~P. Nicol\'{a}s, Alina Ostafe, and Daniel
  Sadornil.
\newblock Stable polynomials over finite fields.
\newblock {\em Rev. Mat. Iberoam.}, 30(2):523--535, 2014.

\bibitem{Hamblen_et_al.}
Spencer Hamblen, Rafe Jones, and Kalyani Madhu.
\newblock {The Density of Primes in Orbits of zd+c}.
\newblock {\em International Mathematics Research Notices}, 2015(7):1924--1958,
  01 2014.

\bibitem{Heath-BrownMicheli}
David~Rodney Heath-Brown and Giacomo Micheli.
\newblock Irreducible polynomials over finite fields produced by composition of
  quadratics.
\newblock {\em Rev. Mat. Iberoam.}, 35(3):847--855, 2019.

\bibitem{Jones_survey}
Rafe Jones.
\newblock {Galois representations from pre-image trees: an arboreal survey}.
\newblock {\em Publ. Math. Besan\c{c}on}, pages 107--136, 2013.

\bibitem{BJ}
Rafe Jones and Nigel Boston.
\newblock Settled polynomials over finite fields.
\newblock {\em Proc. Amer. Math. Soc.}, 140(6):1849--1863, 2012.

\bibitem{BJErrata}
Rafe Jones and Nigel Boston.
\newblock Errata to ``{S}ettled polynomials over finite fields''.
\newblock {\em Proc. Amer. Math. Soc.}, 148(2):913--914, 2020.

\bibitem{lang02}
Serge Lang.
\newblock {\em Algebra}.
\newblock Springer, New York, NY, 2002.

\bibitem{Merai_et_al.}
L\'{a}szl\'{o} M\'{e}rai, Alina Ostafe, and Igor~E. Shparlinski.
\newblock Dynamical irreducibility of polynomials modulo primes.
\newblock {\em Math. Z.}, 298(3-4):1187--1199, 2021.

\bibitem{OstafeShparlinski}
Alina Ostafe and Igor~E. Shparlinski.
\newblock On the length of critical orbits of stable quadratic polynomials.
\newblock {\em Proc. Amer. Math. Soc.}, 138(8):2653--2656, 2010.

\bibitem{stickelberger}
L.~Stickelberger.
\newblock {Über eine neue Eigenschaft der Diskriminanten algebraischer
  Zahlkörper}.
\newblock {\em Verhandlungen Des Besten Internationalen
  Mathematiker-Kongresses}, 9:182--194, August 1897.

\bibitem{sagemath}
{The Sage Developers}.
\newblock {\em {S}ageMath, the {S}age {M}athematics {S}oftware {S}ystem
  ({V}ersion 10.1)}, 2024.
\newblock {\tt https://www.sagemath.org}.

\end{thebibliography}

\end{document}